\DeclareMathOperator{\mult}{mult}
\DeclareMathOperator{\Lie}{Lie}
\DeclareMathOperator{\weight}{weight}
\DeclareMathOperator{\id}{id}
\DeclareMathOperator{\KZ}{KZ}
\DeclareMathOperator{\red}{red}
\DeclareMathOperator{\Ad}{Ad}
\DeclareMathOperator{\depth}{depth}
\DeclareMathOperator{\M}{M}
\DeclareMathOperator{\dR}{dR}
\DeclareMathOperator{\Spec}{Spec}
\DeclareMathOperator{\B}{B}
\DeclareMathOperator{\gr}{gr}
\DeclareMathOperator{\inv}{inv}
\DeclareMathOperator{\mot}{mot}
\DeclareMathOperator{\De}{De}
\DeclareMathOperator{\dec}{dec}
\DeclareMathOperator{\un}{un}
\DeclareMathOperator{\Aut}{Aut}
\DeclareMathOperator{\Gal}{Gal}
\DeclareMathOperator{\GRT}{GRT}
\DeclareMathOperator{\har}{har}
\theoremstyle{definition}
\newtheorem{Théorème}{Théorème}[section]
\newtheorem{Proposition}[Théorème]{Proposition}
\newtheorem{Notation}[Théorème]{Notation}
\newtheorem{Nota Bene}[Théorème]{Nota Bene}
\newtheorem{Remark}[Théorème]{Remark}
\newtheorem{Fact}[Théorème]{Fact}
\newtheorem{Proposition-Definition}[Théorème]{Proposition-Definition}
\newtheorem{Example}[Théorème]{Example}
\newtheorem{Lemma}[Théorème]{Lemma}
\newtheorem{Corollary}[Théorème]{Corollary}
\newtheorem{N.B.}[Théorème]{N.B.}
\DeclareFontFamily{U}{russian}{}
\DeclareFontShape{U}{russian}{m}{n}
        { <5><6> wncyr5
        <7><8><9> wncyr7
        <10><10.95><12><14.4><17.28><20.74><24.88> wncyr10 }{}
\DeclareSymbolFont{Russian}{U}{russian}{m}{n}
\DeclareSymbolFontAlphabet{\mathcyr}{Russian}
\let\@math@cyr\mathcyr
\renewcommand{\mathcyr}[1]{\@math@cyr{\cyracc #1}}
\newcommand{\sh}{\mathcyr{sh}} 
\newcounter {subsubsubsection}[subsubsection]
\renewcommand\thesubsubsubsection{\thesubsubsection .\@alph\c@subsubsubsection}
\newcommand\subsubsubsection{\@startsection{subsubsubsection}{4}{\z@}%
                                     {-3.25ex\@plus -1ex \@minus -.2ex}%
                                     {1.5ex \@plus .2ex}%
                                     {\normalfont\normalsize\bfseries}}
\newcommand*\l@subsubsubsection{\@dottedtocline{3}{10.0em}{4.1em}}
\newcommand*{\subsubsubsectionmark}[1]{}
\numberwithin{equation}{section}
\author{David Jarossay}
\address{Mathematics Department, Ben Gurion University of the Negev, Be'er-Sheva`, Israel}
\email{jarossay@post.bgu.ac.il}
\begin{document}

\title{Depth reductions for associators}

\maketitle

\begin{abstract}
We prove that for any associator, two specific families of coefficients of the associator can be expressed in terms of coefficients of lower depth. Combining these results to our notions of adjoint $p$-adic multiple zeta values and multiple harmonic values, we obtain a new point of view on the question of relating $p$-adic and finite multiple zeta values, and a few other application to the study of $p$-adic multiple zeta values via explicit formulas.
\end{abstract}

\setcounter{section}{-1}

\tableofcontents

\section{Introduction}

\subsection{Real and $p$-adic multiple zeta values}

Multiple zeta values (MZV's) are the following real numbers : for any positive integers $d$ and $n_{i}$ ($1 \leq i \leq d$) such that $n_{d}\geq 2$,

\begin{equation} \label{eq:mzv} \zeta(n_{1},\ldots,n_{d})
= \sum_{0<m_{1}<\cdots<m_{d}} \frac{1}{m_{1}^{n_{1}} \cdots m_{d}^{n_{d}}} .
\end{equation}

We say that $(n_{1},\ldots,n_{d})$ has weight $n=n_{1}+\cdots+n_{d}$ and depth $d$. Denoting by $(\epsilon_{n},\ldots,\epsilon_{1}) = (\underbrace{0, \ldots, 0}_{n_{d}-1}, 1,\ldots,\underbrace{0, \ldots, 0}_{n_{1}-1},1)$, we have

\begin{equation} \label{eq:mzv integral}
\zeta(n_{1},\ldots,n_{d}) = (-1)^{d} \int_{t_{n}=0}^{1} \frac{dt_{n}}{t_{n}-\epsilon_{n}} \cdots \int_{t_{1}=0}^{t_{2}} \frac{dt_{1}}{t_{1}-\epsilon_{1}} .
\end{equation}

$p$-adic multiple zeta values ($p$MZV's) are numbers $\zeta_{p,\alpha}(n_{1},\ldots,n_{d}) \in \mathbb{Q}_{p}$ (where $\alpha$ is a parameter in $(\mathbb{Z} \cup \{\pm\infty\}) - \{0\}$, according to our generalized definition) defined as certain $p$-adic analogues of the integrals (\ref{eq:mzv}) \cite{Furusho MZV1, Furusho MZV2, Deligne Goncharov, Unver MZV, J1, J3}.

\subsection{Multiple zeta values and associators}

An associator is a non-commutative formal power series satisfying certain polynomial equations \cite{Drinfeld}. Multiple zeta values and their $p$-adic analogue provide an example of associator.

For any ring $R$, let $R \langle\langle e_{0},e_{1} \rangle\rangle$ be the $R$-algebra of formal power series on the non-commuting variables $e_{0},e_{1}$ with coefficients in $R$. We write elements $f$ of $R\langle\langle e_{0},e_{1} \rangle\rangle$ as follows : $f=\sum\limits_{w\text{ word on }\{e_{0},e_{1}\}} f[w]w$ with  $f[w] \in R$, i.e. we denote by $f[w] \in R$ the coefficient of a word $w$ in $f$.

Let $K$ be a field of characteristic $0$, and $\mu \in K$. One has an affine scheme $M_{\mu}$ of associators with parameter $\mu$ over $K$. One also has an algebraic group $\GRT_{1}$, such that $M_{\mu}$ is a torsor under that group, which is isomorphic to $M_{0}$ as an affine scheme. The points of $M_{0}$ are called degenerated associators and the points of $M_{\mu}$ for $\mu \not= 0$ are called non-degenerated associators. For any $K$-algebra $R$, we have inclusions $M_{\mu}(R) \subset R \langle\langle e_{0},e_{1} \rangle\rangle$ and $\GRT_{1}(R) \subset R \langle\langle e_{0},e_{1} \rangle\rangle$, which are functorial and compatible to the torsor structure.

Let $\Phi_{\KZ} \in M_{2i\pi}(\mathbb{R})$ be the  Knizhnik-Zamolodchikov (or KZ) associator defined in \cite{Drinfeld}, \S2. We have for all integers $d$ and $n_{i}$ $(1 \leq i \leq d)$ we have $\zeta(n_{1},\ldots,n_{d}) = (-1)^{d} \Phi_{\KZ}[e_{0}^{n_{d}-1}e_{1}\cdots e_{0}^{n_{1}-1}e_{1}]$.

The fact that $\Phi_{\KZ}$ is an associator means the following. We have a connexion, called the Knizhnik-Zamolodchikov connection, on a trivial bundle on $\mathcal{M}_{0,4} = \mathbb{P}^{1} - \{0,1,\infty\}$ and $\mathcal{M}_{0,5} = (\mathbb{P}^{1} - \{0,1,\infty\})^{2} - \Delta$. The integration of this connection along certain special paths gives a trivial result because those paths are contractile and has at the same time a non-trivial expression (\cite{Drinfeld}, \S2).

For each $\alpha$, we have a $p$-adic analogue $\Phi_{p,\alpha} \in \mathbb{Q}_{p}\langle\langle e_{0},e_{1} \rangle\rangle$ of $\Phi_{\KZ}$, such that
$\zeta_{p,\alpha}(n_{1},\ldots,n_{d}) = (-1)^{d} \Phi_{p,\alpha}[e_{0}^{n_{d}-1}e_{1}\cdots e_{0}^{n_{1}-1}e_{1}]$ for all integers $d$ and $n_{i}$ $(1 \leq i \leq d)$. We have $\Phi_{p,\alpha}\in \GRT_{1}(\mathbb{Q}_{p})$ \cite{Unver}.

The fact that $\Phi_{\KZ}$ is in $M_{2\pi i}(\mathbb{R})$, resp. $\Phi_{p,\alpha}$ is in $\GRT_{1}(\mathbb{Q}_{p})$ amounts to polynomial equations over $\mathbb{Q}$ satisfied by MZV's resp. $p$MZV's. It is conjectured that they generate the ideal of all polynomial equations over $\mathbb{Q}$ satisfied by MZV's resp. $p$MZV's, and that the algebraic equations over $\mathbb{Q}$ among MZV's modulo $\zeta(2)$ are the same with the algebraic equations over $\mathbb{Q}$ of $p$-adic MZV's (see for example \cite{Andre} \S25.4).

\subsection{Multiple zeta values as periods, and the depth filtration}

Equation (\ref{eq:mzv}) shows that MZV's are \emph{periods} in the sense of algebraic geometry. More precisely, it shows that they are Betti - de Rham periods of the pro-unipotent fundamental groupoid ($\pi_{1}^{\text{un}}$) of $\mathcal{M}_{0,4}=\mathbb{P}^{1} - \{0,1,\infty\}$ (\cite{Deligne Goncharov} \S5.16). Similarly, $p$-adic multiple zeta values are reductions of $p$-adic periods of $\pi_{1}^{\text{un}}(\mathbb{P}^{1} - \{0,1,\infty\})$ \cite{Yamashita}.

This means that $\pi_{1}^{\text{un}}(\mathbb{P}^{1} - \{0,1,\infty\})$ can be taken as a tool from algebraic geometry to study the $\mathbb{Q}$-algebra generated by MZV's, resp. $p$MZV's. More precisely, it is conjectured that the $\mathbb{Q}$-algebra generated by MZV's resp. $p$MZV's is equal to its lift defined by the motivic $\pi_{1}^{\text{un}}(\mathbb{P}^{1} - \{0,1,\infty\})$ (\cite{Deligne Goncharov}, \S5).

Multiple zeta values are an important example of periods, because the corresponding motives are mixed Tate motives, because of their relation with the theory of associators, their appearance in quantum field theory.

Among the study of the $\mathbb{Q}$-algebra generated by MZV's, resp. $p$MZV's, one theme is the role played by the depth filtration. 

We say that an associator $\Phi \in M_{\mu}(K)$ admits a depth reduction at a certain linear combination $\sum_{i} a_{i}w_{i}$ of words of a given depth $d$ if $\Phi[a_{i}w_{i}]$ is a $\mathbb{Q}[\mu]$-polynomial of coefficients of $\Phi$ at words of depth strictly less than d. In the case of $\Phi_{\KZ}$ (where $\mu=2\pi i$), the simplest example is the following famous formula due to Euler :
\begin{equation} \label{eq:Euler} \forall n\geq 1,\text{ } \zeta(2n) = \frac{(-1)^{n+1}B_{2n}(2\pi)^{2n}}{2(2n)!}.
\end{equation}
The $p$-adic analogue of (\ref{eq:Euler}) is that $ \forall n \geq 1$, $\zeta_{p,\alpha}(2n)=0$. 

Studying depth reductions amounts to study the \emph{depth-graded multiple zeta values}. They appear in quantum field theory and in relation with double shuffle relations \cite{IKZ}, and they play a role in the study of the pro-unipotent fundamental group of moduli spaces of curves of genus one. 

To our knowledge, associator equations are usually not used to deal with depth reductions because they are not directly adapted to the depth filtration, unlike double shuffle relations. In this paper, we are going to reformulate certain associator equations in order to make them naturally adapted to the depth filtration. We will regard these equations as depending of two variables : an associator $\Phi$ and a variant $\Phi_{\infty}$ of $\Phi$. Moreover, we will consider intrinsically $\Phi^{-1}e_{1}\Phi$ and $\Phi^{-1}_{\infty}e_{\infty}\Phi_{\infty}$, where $e_{\infty}$ is such that $e_{0} + e_{1} + e_{\infty}=0$.

\subsection{The relation between $p$-adic and finite multiple zeta values}

Let $\mathcal{P}$ be the set of prime numbers. The ring of integers modulo infinitely large primes \cite{Ko} is the following ring :

$$ \mathcal{A} = \Big( \prod_{p \in \mathcal{P}} \mathbb{Z}/p\mathbb{Z} \Big) \Big/ \Big( \underset{p \in \mathcal{P}}{\bigoplus} \mathbb{Z}/p\mathbb{Z} \Big) . $$

Finite multiple zeta values (fMZV's) are defined by Kaneko and Zagier, after the work of other authors in particular Hoffman and Zhao, as the following elements of $\mathcal{A}$ : for any positive integers $d$ and $n_{i}$ ($1 \leq i \leq d)$,

$$ \zeta_{\mathcal{A}}(n_{1},\ldots,n_{d}) = \bigg(  \sum_{0<m_{1}<\cdots<m_{d}<p} \frac{1}{m_{1}^{n_{1}}\cdots m_{d}^{n_{d}}} \mod p \bigg)_{p \in \mathcal{P}} $$

There are striking analogies between finite multiple zeta values and real modulo $\zeta(2)$ resp. $p$-adic multiple zeta values. Kaneko and Zagier conjecture that there is an isomorphism between the $\mathbb{Q}$-algebra generated by finite MZV's and the $\mathbb{Q}$-algebra generated by MZV's moded out by the ideal $(\zeta(2))$, and a non-obvious explicit formula for that isomorphism (see \S5.2).

Relating $p$-adic and finite multiple zeta values was one of our main motivations for our work on the explicit computation of $p$-adic multiple zeta values \cite{J1, J2, J3} and the relation between this explicit computation and the algebraic theory \cite{J4, J5, J6}.

In \cite{J2}, we have computed $p$-adic multiple zeta values and found a $p$-adic analogue of the formula (\ref{eq:mzv}). This means that we have found a formula which is expressed in terms of the multiple harmonic sums, the numbers 
$\displaystyle \sum\limits_{0<m_{1}<\ldots<m_{d}<m} \frac{1}{m_{1}^{n_{1}} \cdots m_{d}^{n_{d}}}$. 
Using our computation and an integrality result of $p$-adic multiple zeta values, Akagi, Hirose and Yasuda have shown that the reduction of $p$-adic multiple zeta values can be expressed in terms of finite multiple zeta values \cite{AHY}.

We will point out in this paper a new interpretation of the question of relating $p$-adic and finite multiple zeta values, which arises naturally from our study of $p$-adic multiple zeta values. We will explain that this question is a simple quotient of a more general and deeper question, related to a theme studied in \cite{J6}. The study of depth reductions for associators gives a way to tackle this deeper question and also has other applications to our study of $p$-adic multiple zeta values.

\subsection{Main result}

In this paper we reformulate part of the associator equations in a way and which is adapted to the depth filtration in a certain sense. As a consequence, we prove two families of depth reductions :

\textbf{Theorem.} \emph{Let $\Phi$ be an associator and let any positive integers $d$ and $n_{i}$ $(1 \leq i \leq d)$. We have :}

\emph{(i) $\Phi[e_{0}^{n_{d}-1}e_{1}\cdots e_{0}^{n_{1}-1}e_{1}] + (-1)^{n_{d}+\cdots+n_{1}}\Phi[e_{0}^{n_{1}-1}e_{1}\cdots e_{0}^{n_{d}-1}e_{1}]$ admits a depth reduction.}

\emph{(ii) If $n_{1}+\cdots+n_{d}-d$ is odd, then $\Phi[e_{0}^{n_{1}-1}e_{1}\cdots e_{0}^{n_{d}-1}e_{1}]$ admits a depth reduction.}

This theorem can also be obtained by combining the fact that these depth reductions hold for solutions to the double shuffle equations (\cite{Yasuda} for (i), \cite{IKZ} for (ii)), and the fact that associator satisfy the double shuffle relations \cite{Furusho assoc ds}. The present proof is simpler, it provides an information on the $p$-adic norms of the rational coefficients, and it has applications to our theory of $p$-adic multiple zeta values via explicit formulas. These applications are explained in \S5. The $d=1$ case of (ii) is Euler's formula (\ref{eq:Euler}). We call (i) the \emph{adjoint depth reduction} and (ii) is the \emph{parity depth reduction}.

We review definitions in \S1, we write preliminary 
computations in \S2, we treat the case of $\GRT_{1}=M_{0}$ in \S3 and the case of $M_{\mu}$ with $\mu\not= 0$ in \S4. We explain the applications to $p$MZV's in \S5. In \S5.1, we apply the techniques used in the previous proofs to clarify the relation between $p$-adic multiple zeta values and adjoint $p$-adic multiple zeta values, which are defined in \cite{J4}, implicitly present in \cite{J1, J2, J3}. In \S5.2 we give a new point of view on the relation between $p$-adic and finite multiple zeta values. In \S5.3, by adapting some of our proofs we deduce a property related to the problem of the interpolation of $p$-adic multiple zeta values.

\emph{Acknowledgments.} I thank the referee for his useful corrections and suggestions. I thank Benjamin Enriquez and Erik Panzer for discussions. This work has been done at Universit\'{e} Paris Diderot with support of ERC grant n°257638, at Universit\'{e} de Strasbourg with support of Labex IRMIA. It has been revised at Universit\'{e} de Gen\`{e}ve with support of NCCR SwissMAP and at Ben Gurion University of the Negev with support of the ISF grant n°87590031 of Ishai Dan-Cohen.

\section{Definitions and notations}

We review the framework of the de Rham pro-unipotent fundamental groupoid of $\mathcal{M}_{0,4} \simeq \mathbb{P}^{1} - \{0,1,\infty\}$ and $\mathcal{M}_{0,5}$, and the notion of associators.
 
\subsection{The de Rham pro-unipotent fundamental groupoid of $\mathcal{M}_{0,4}$ and $\mathcal{M}_{0,5}$}

\subsubsection{Generalities}

The notion of pro-unipotent fundamental group of a smooth algebraic variety $X$ has been introduced in \cite{Deligne}, with a motivation from Grothendieck's \emph{Esquisse d'un programme}, which led to consider the cases of $X= \mathcal{M}_{g,n}$ the moduli space of curves of genus $g$ with $n$ marked points ; more particularly, the case $g \in \{0,1\}$, and, even more particularly, the projective line minus three points $\mathcal{M}_{0,4} \simeq \mathbb{P}^{1} - \{0,1,\infty\}$, $\mathcal{M}_{0,5} \simeq \{(y_{1},y_{2}) \in (\mathbb{P}^{1} - \{0,1,\infty\})^{2} \text{ }|\text{ }y_{1}\not=y_{2}\}$, $\mathcal{M}_{1,1}$ and $\mathcal{M}_{1,2}$. Later, the $\pi_{1}^{\un}$ of those varieties found the other application to periods, which is our motivation.

Let a smooth algebraic variety $X=\overline{X} - D$ over a field $K$ of characteristic $0$, where $\overline{X}$ is projective and smooth and $D$ is a normal crossings divisor. The de Rham realization $\pi_{1}^{\un,\dR}(X)$ of the pro-unipotent fundamental groupoid of $X$ is  the fundamental groupoid associated with the Tannakian category $\mathcal{C}^{\un,\dR}(X)$ of vector bundles with integrable connection with logarithmic singularity at $D$, and which are unipotent (\cite{Deligne}, \S10.27, \S10.30 (ii)).
We will denote by $Vec_{K}$ the category of vector spaces on $K$.

Each point $x$ of $X$ yields the fiber functor $\omega_{x} : C^{un,dR} \rightarrow Vec_{K}$ which sends a bundle to its fiber at $x$, thus defines a base-point of $\pi_{1}^{\un,\dR}(X)$ in the Tannakian sense. Similarly for each non-zero point $x$ of the tangent space at a point of $D$, in which case $\omega_{x}$ is called a tangential base-point of $\pi_{1}^{\un,\dR}(X)$.

For each couple of base-points $x,y$, the affine scheme $\pi_{1}^{\un,\dR}(X,y,x)$ is defined as the scheme of paths from $x$ to $y$ in the Tannakian sense, i.e. the scheme of isomorphisms of tensor functors $\omega_{x} \simeq \omega_{y}$.

For three base-points $x,y,z$, we have a canonical map 
$\pi_{1}^{\un,\dR}(X,z,y) \times \pi_{1}^{\un,\dR}(X,y,x) \rightarrow \pi_{1}^{\un,\dR}(X,z,x)$ defined by composing isomorphisms of fiber functors, called the groupoid multiplication. By this map, each $\pi_{1}^{\un,\dR}(X,x)= \pi_{1}^{\un,\dR}(X,x,x)$ is an affine group scheme, and each $\pi_{1}^{\un,\dR}(X,y,x)$ is a bi-torsor under the couple of algebraic groups $(\pi_{1}^{\un,\dR}(X,x),\pi_{1}^{\un,\dR}(X,y))$.

\subsubsection{Case of $\mathcal{M}_{0,4}$ and $\mathcal{M}_{0,5}$}

When $H^{1}(X,\mathcal{O}_{X})=0$, the functor which sends a bundle to its global sections is a tensor functor and thus defines a base-point $\omega_{\dR}$ of $\pi_{1}^{\un,\dR}(X)$, called the canonical base-point. For any base-point $x$, there is a canonical ismorphism $\omega \simeq \omega_{x}$. As a consequence, for all base-points $x,y$, there is an isomorphism of schemes 
$\pi_{1}^{\un,\dR}(X,y,x) \simeq \pi_{1}^{\un,\dR}(X,\omega_{\dR})$. These isomorphisms are compatible with the groupoid multiplication. (\cite{Deligne}, \S12.1, \S12.4). They enable us to do most of the computations in $\pi_{1}^{\un,\dR}(X,\omega_{\dR})$.

We will consider the cases $X=\mathcal{M}_{0,4}$ and $X=\mathcal{M}_{0,5}$, over a field $K$ of characteristic zero. We have $H^{1}(X,\mathcal{O}_{X})=0$, thus the canonical base-point $\omega_{\dR}$  of $\pi_{1}^{\un,\dR}(X)$ is defined. By \cite{Deligne}, \S12.8, we have an explicit description of $\pi_{1}^{\un,\dR}(X,\omega_{\dR})$ as a pro-unipotent algebraic group.

Namely, $\pi_{1}^{\un,\dR}(\mathcal{M}_{0,4},\omega_{\dR})$ is the exponential of the pro-nilpotent Lie algebra over $\mathbb{Q}$ defined by the generators $e_{0},e_{1},e_{\infty}$ and the relation $e_{0}+e_{1}+e_{\infty} = 0$, i.e. freely generated by $e_{0},e_{1}$. And  $\pi_{1}^{\un,\dR}(\mathcal{M}_{0,5},\omega_{\dR})$ is the exponential of the pro-nilpotent Lie algebra over $\mathbb{Q}$ defined by the generators $e_{i,j}, 1\leq i,j\leq 4$, and the relations $e_{ii} = 0$, $e_{ji} = e_{ij}$, $[e_{jk}+e_{jl},e_{kl}] = 0$ if $j,k,l$ are pairwise distinct, and $[e_{ij},e_{kl}] = 0$ if $i,j,k,l$ are pairwise distinct.

We will now describe in more detail the scheme  $\pi_{1}^{\un,\dR}(\mathcal{M}_{0,4},\omega_{\dR})$ which we will denote by $\Pi$, following the notation of \cite{Deligne Goncharov}, \S5.  Its Hopf algebra $\mathcal{O}^{\sh}$ is a particular case of a shuffle Hopf algebras. Generalities on shuffle Hopf algebras and their duals as universal enveloping algebras of free Lie algebras can be found in \cite{R}. Most of our computations will be done using $\Pi$.

\subsubsection{The Hopf algebra of $\Pi =\pi_{1}^{\un,\dR}(\mathcal{M}_{0,4},\omega_{\dR})$}

The affine Hopf algebra $\mathcal{O}^{\sh} = \mathcal{O}(\Pi)$ is the shuffle Hopf algebra over $\mathbb{Q}$ on the alphabet in two letters $\{e_{0},e_{1}\}$. A basis of $\mathcal{O}^{\sh}$ as a vector space is the set of words on $\{e_{0},e_{1}\}$ (including the empty word, denoted by $1$, and we take the convention to read words from the right to the left.)  The product $\sh$, called the \emph{shuffle product}, is defined by 
$e_{i_{n+n'}} \cdots e_{i_{n+1}} \text{ }\sh\text{ }e_{i_{n}}\cdots e_{i_{1}}
= \sum_{\sigma} e_{i_{\sigma^{-1}(n+n')}} \cdots e_{i_{\sigma^{-1}(1)}}$
where the sum is over the permutations $\sigma$ such that 
$\sigma(1) < \cdots<\sigma(n)$ and $\sigma(n+1)<\cdots<\sigma(n+n')$; the coproduct is defined by the deconcatenation $dec$ of words; the antipode is defined by $S : e_{i_{n}} \cdots e_{i_{1}} \mapsto (-1)^{n} e_{i_{1}} \cdots e_{i_{n}}$; the counit is the augmentation map.

For any two functions $a,b : \mathcal{O}^{\sh} \rightarrow A$, where $A$ is any $\mathbb{Q}$-algebra, their convolution product $a \bullet b$ is $\mult \circ (a \otimes b) \circ \dec$ where $\mult$ is the multiplication $A \otimes A \rightarrow A$.

\subsubsection{The completed dual of the Hopf algebra of $\Pi = \pi_{1}^{\un,\dR}(\mathcal{M}_{0,4},\omega_{\dR})$}

$\mathcal{O}(\Pi)$ is a graded Hopf algebra, where the grading is the number of letters of words on $\{e_{0},e_{1}\}$, called their weight. Its completed dual $\widehat{\mathcal{O}(\Pi)}^{\vee}$ as a graded Hopf algebra is the universal enveloping algebra of the completed (with respect to the lower central series) free Lie algebra $\mathbb{L}$ on the two generators $e_{0},e_{1}$. $\widehat{\mathcal{O}(\Pi)}^{\vee}$ is equal to $\mathbb{Q}\langle \langle e_{0},e_{1} \rangle\rangle$, the $\mathbb{Q}$-algebra of non-commutative power series over the two variables $e_{0},e_{1}$. Its multiplication is the usual multiplication of non-commutative formal power series. Its coproduct is the shuffle coproduct $\Delta_{\sh}$ defined by $\Delta_{\sh}(f)[w \otimes w'] = f[w\text{ }\sh\text{ }w']$, the multiplicative coproduct which satisfies $\Delta_{\sh}(e_{i}) = e_{i} \otimes 1 + 1 \otimes e_{i}$ for $i = 0,1$. It is usual to refer to the following description of $\Pi$ and its Lie algebra : for any $\mathbb{Q}$-algebra $A$,

(a) $\Pi(A)$ is the set of elements $f \in A \langle\langle e_{0},e_{1} \rangle\rangle$ which satisfy the shuffle equation, i.e. such that $f[1]=1$ and, for all non-empty words $w,w'$, we have $f[w\text{ }\sh\text{ }w'] = f[w]f[w']$,  and $f[1] = 1$. This amounts to say that $\Pi(A)$ is the set of grouplike elements in $A \langle\langle e_{0},e_{1}\rangle\rangle$, i.e. the elements $f$ such that $\Delta_{\sh}(f)= f \otimes f$.

(b) $\Lie \Pi(A)$ is the set of elements $f \in A \langle\langle e_{0},e_{1}\rangle\rangle$ which satisfy the shuffle equations modulo products, i.e. such that for all non-empty words $w,w'$, we have $f[w\text{ }\sh\text{ }w'] = 0$. This amounts to say that $\Lie \Pi(A)$ is the set of primitive elements in $A \langle\langle e_{0},e_{1} \rangle\rangle$, i.e. the elements such that $\Delta_{\sh}(f) = f \otimes 1 + 1 \otimes f$.

\subsubsection{A few more conventions}

We will often denote an element $f \in A \langle\langle e_{0},e_{1} \rangle\rangle$ as $f(e_{0},e_{1})$.

We usually write a word on $\{e_{0},e_{1}\}$ in the form $e_{0}^{n_{d}-1}e_{1} \cdots e_{0}^{n_{1}-1}e_{1}e_{0}^{n_{0}-1}$, where $d$ and the $n_{i}$'s are positive integers, or, if $d=0$, $e_{0}^{n_{0}-1}$ when $n_{0}$ is a positive integer. We will see that for most computations we can consider only words such that $n_{0}=1$ and $n_{d} \geq 2$. For short we will often denote a word $e_{0}^{n_{d}-1}e_{1} \cdots e_{0}^{n_{1}-1}e_{1}e_{0}^{n_{0}-1}$ by $\textbf{e}^{n_{d}-1,\ldots,n_{0}-1}$.

We denote by $\tilde{\Pi}$ the subgroup scheme of $\Pi$ defined by the equations $f[e_{0}]=f[e_{1}]=0$. It is also the exponential of $[\mathbb{L},\mathbb{L}]$, i.e. it is the commutator subgroup of $\Pi$.

\subsection{Associators}

From now on, $K$ is a field of characteristic zero (and we will later take it to be $\mathbb{C}$ or $\mathbb{Q}_{p}$ for a prime $p$).
We review a few definitions and properties from \cite{Drinfeld}. Let $\mu \in K$. Some of the equations below use the fact that, for $f=f(e_{0},e_{1}) \in \pi_{1}^{\un,\dR}(\mathcal{M}_{0,4},\omega_{\dR})(K)$, and for $i,j,k,l$ pairwise distinct, $f(e_{ij},e_{kl})$ is a point in 
$\pi_{1}^{\un,\dR}(\mathcal{M}_{0,5},\omega_{\dR})$.

\subsubsection{$M_{\mu}$}

(\cite{Drinfeld}, beginning of \S5 and just after (5.3)) The scheme of associators with parameter $\mu$ is the subscheme $M_{\mu}$ of $\tilde{\Pi}$ whose points are elements $\Phi$ satisfying the following equations in 
$\pi_{1}^{\un,\dR}(\mathcal{M}_{0,5},\omega_{\dR})$.

\begin{equation} \label{eq:pentagon} \Phi(e_{12},e_{23}+e_{24})
\Phi(e_{13}+e_{23},e_{34}) = 
\Phi(e_{23},e_{34}) \Phi(e_{12}+e_{13},e_{24}+e_{34})
\Phi(e_{12},e_{23}) ,
\end{equation}

\begin{equation} \label{eq:hexagon} e^{\frac{\mu e_{13}}{2}}\Phi(e_{13},e_{12}) 
e^{\frac{\mu e_{12}}{2}}\Phi(e_{31},e_{12})
e^{\frac{\mu e_{13}}{2}}\Phi(e_{23},e_{31}) =  e^{\frac{\mu(e_{12}+e_{23}+e_{31})}{2}} ,
\end{equation}
\begin{equation} \label{eq:duality} \Phi(e_{12},e_{23})\Phi(e_{23},e_{12}) = 1.
\end{equation}
\noindent Equations (\ref{eq:pentagon}), (\ref{eq:hexagon}) and (\ref{eq:duality}) are called, respectively, the pentagon, hexagon and duality equations. We have, if $\mu\not=0$, $\Phi(e_{0},e_{1}) \in M_{\mu}(K) \Leftrightarrow \Phi(\frac{1}{\mu}e_{0},\frac{1}{\mu}e_{1}) \in M_{1}(K)$.

\subsubsection{$\GRT_{1}$} (\cite{Drinfeld}, (5.12) to (5.15)) The graded Grothendieck-Teichmüller group is the subscheme $\GRT_{1}$ of $\tilde{\Pi}$ whose points are elements $\Phi$ satisfying the following equations in $\Pi$, resp. 
$\pi_{1}^{\un,\dR}(\mathcal{M}_{0,5},\omega_{\dR})$ :
\begin{equation} \label{eq:2 cycle} \Phi(e_{0},e_{1})\Phi(e_{1},e_{0}) = 1 ,
\end{equation}
\begin{equation} \label{eq:3 cycle} \Phi(e_{\infty},e_{0})\Phi(e_{1},e_{\infty})
\Phi(e_{0},e_{1}) = 1 ,
\end{equation}
\begin{equation} \label{eq:5 cycle}
\Phi(e_{12},e_{23}+e_{24}) \Phi(e_{13}+e_{23},e_{34})
= \Phi(e_{23},e_{34}) \Phi(e_{12}+e_{13},e_{24}+e_{34}) \Phi(e_{12},e_{23})  ,
\end{equation}
\begin{equation} \label{eq:CH0}
e_{0} + \Phi^{-1}(e_{0},e_{1})e_{1}\Phi(e_{0},e_{1}) + \Phi(e_{0},e_{\infty})^{-1}e_{\infty} \Phi(e_{0},e_{\infty}) = 0 .
\end{equation}
\noindent Equations (\ref{eq:2 cycle}), (\ref{eq:3 cycle}), (\ref{eq:5 cycle}), (\ref{eq:CH0}) are called, respectively the 2-cycle or duality, 3-cycle, 5-cycle or pentagon equation, and the equation of special automorphisms.

\subsubsection{Torsor structure} 

Equations (\ref{eq:2 cycle}), (\ref{eq:3 cycle}) and (\ref{eq:5 cycle}) imply (\ref{eq:CH0}), and $\GRT_{1} = M_{0}$  (\cite{Drinfeld}, Proposition 5.9).
$\GRT_{1}$ is a group scheme with the Ihara product defined by $(g_{2} \circ g_{1})(e_{0},e_{1}) = 
g_{2}(e_{0},e_{1})g_{1}(e_{0},g_{2}^{-1}e_{1}g_{2})$  (\cite{Drinfeld}, equation (5.16)), and the multiplication by the Ihara product defines an action of $\GRT_{1}$ on $M_{\mu}$ which makes $M_{\mu}$ into a $\GRT_{1}$-torsor (\cite{Drinfeld}, Proposition 5.5).
For each $\mu \in K$, an associator with parameter $\mu$ satisfies (\cite{AET}, \S5.2) :
\begin{equation} \label{eq:CH}
-\Phi^{-1}(e_{0},e_{1}) e^{-\mu e_{1}}\Phi(e_{0},e_{1}) e^{-\mu e_{0}}
= e^{\frac{\mu}{2}e_{0}} \Phi(e_{0},e_{\infty})^{-1} e^{\mu e_{\infty}} \Phi(e_{0},e_{\infty}) e^{-\frac{\mu}{2}e_{0}} .
\end{equation} Equation (\ref{eq:CH0}) is the coefficient of degree $1$ with respect to $\mu$ of equation (\ref{eq:CH}). We take the convention that $\mu$ has depth $0$ and weight $1$.

\subsubsection{Equations from $\mathcal{M}_{0,4}$\label{one dimensional}}

We will consider in the next parts equations expressible in terms of $\pi_{1}^{\un}(\mathcal{M}_{0,4})$, which we call one-dimensional equations. In the case of $\GRT_{1}$, these are equations (\ref{eq:2 cycle}), (\ref{eq:3 cycle}) and (\ref{eq:CH0}). In the case of $\M_{\mu}$, with $\mu\not= 0$, these are equations (\ref{eq:hexagon}) (\ref{eq:duality}), modulo the Lie ideal generated by $e_{12}+e_{23}+e_{31}$, and equation (\ref{eq:CH}). Modulo this ideal we will identify $e_{12},e_{23},e_{31}$ to $e_{0},e_{1},e_{\infty}$ respectively.

\section{Properties for computations with the de Rham fundamental groupoid of $\mathbb{P}^{1} - \{0,1,\infty\}$}

We write the compatibility between various operations on $\Pi = \Spec(\mathcal{O}^{\sh}) = \pi_{1}^{\un,\dR}(\mathbb{P}^{1} - \{0,1,\infty\},\omega_{dR})$ and the depth filtration.
 
\subsection{Shuffle algebra, weight and depth}

\subsubsection{Around the depth filtration and the weight}

Let $\mathcal{O}^{\sh}_{[0,d]}$ be the vector subspace of $\mathcal{O}^{\sh}$ generated by shuffle products of words of depth $\leq d$.
Let $\mathcal{O}^{\sh}_{n,[0,d]} \subset \mathcal{O}^{\sh}_{[0,d]}$ the subspace of elements of weight $n$.
\newline\indent We say that a linear map $f : \mathcal{O}^{\sh} \rightarrow \mathcal{O}^{\sh} $ preserves the depth filtration if we have $f (\mathcal{O}^{\sh}_{[0,d]}) \subset \mathcal{O}^{\sh}_{[0,d]}$ for any $d$. For such an $f$, we denote by $\gr_{D}(f)$ the map which associates to a word $w$ of depth $d$ the terms of depth $d$ in $f(w)$.
\newline\indent On the other hand, $\mathcal{O}^{\sh}$ is the graded algebra $\underset{n \geq 0}{\oplus} \mathcal{O}^{\sh}_{n}$ where $\mathcal{O}^{\sh}_{n}$ is the subspace of elements of weight $n$. Thus, let $R=(R_{n})_{n \geq 0}$ be an increasing ring filtration on $\mathbb{Q}$. We denote by ${}_R \mathcal{O}^{\sh}$ the restriction of scalars of $\mathcal{O}^{\sh}$ to $R$, i.e. for any $n\geq 0$, we allow scalars in $R_{n}$ for terms in $\mathcal{O}^{\sh}_{n}$. We will denote by $\mathbb{Z}$ the constant ring filtration defined by $R_{n}=\mathbb{Z}$ for all $n$.
\newline\indent Let $R$ such a filtration. Let $f \in \tilde{\Pi}(K)$, let $w \in \mathcal{O}^{\sh}$ be a word of depth $d$ and weight $n$. We say that $w$ admits a depth reduction for $f$ with coefficients in $R$ if $f[w]$ is in the $R_{n}$-module generated by elements $f[w'_{1}] \cdots f[w'_{i}]$ with words $w'_{j}$ such that $\depth(w'_{j})< d$ for all $j$ and $\sum_{j=1}^{i} \weight(w'_{j})=n$.

\subsubsection{A few consequences of the shuffle equations\label{convergent words}}

Let $f \in K \langle \langle e_{0},e_{1}\rangle\rangle$.
\newline If for all words $w$, $f[w\text{ }\sh\text{ }e_{0}] = 0$, (in particular, if the shuffle equation of (1.1) is satisfied), then, for any positive integers $d$, $n_{i}$ ($1 \leq i \leq d)$ and $l$, we have :
\begin{equation} \label{eq:shuffle1} f[e_{0}^{n_{d}-1}e_{1}\cdots e_{0}^{n_{1}-1}e_{1}e_{0}^{l}] = \sum_{\substack{l_{1},\ldots,l_{d}\geq 0 \\ l_{1}+\cdots+l_{d} = l}} \prod_{i=1}^{d} {-n_{i} \choose l_{i}} f[e_{0}^{n_{d}+l_{d}-1}e_{1} \cdots e_{0}^{n_{1}+l_{1}-1}e_{1}] .
\end{equation}
This can be proved by induction on $l$ by developing the equation 
$f[e_{0}^{n_{d}-1}e_{1}\cdots e_{0}^{n_{1}-1}e_{1}\text{ }\sh\text{ }e_{0}^{l}]=0$.

If, for any word $w$, $f[w\text{ }\sh\text{  }e_{1}] = 0$ (in particular, if the shuffle equation of (1.2) is satisfied), then, for any word $w$ and any positive integer $l$, we have :
\begin{equation} \label{eq:shuffle2}  f[e_{1}^{l}e_{0}w] =\frac{(-1)^{l}}{l!} f[e_{0}(e_{1}^{l}\text{ }\sh\text{ }w)].
\end{equation}

This can be proved by induction on $l$ by developing the equation $f[e_{1} \text{ }\sh\text{ }e_{1}^{l-1}e_{0}w]= 0$.

It follows from (\ref{eq:shuffle1}) and (\ref{eq:shuffle2}) that the vector space $f(\mathcal{O}^{\sh}_{n,[0,d]})$ is generated by the coefficients of the form $f[e_{0}^{n_{d}-1}e_{1} \cdots e_{0}^{n_{1}-1}e_{1}]$ with $n_{1}+\cdots+n_{d}=n$ and $n_{d} \geq 2$. This holds in particular for the elements $f \in \tilde{\Pi}(K)$ (they satisfy the shuffle equation in the sense of \S1.1.4 (a)) and for the elements $g^{-1}e_{x}g$ with $g \in \Pi(K)$ and $x \in \{0,1,\infty\}$ (they satisfy the shuffle equation modulo products in the sense of \S1.1.4 (b)).

\subsubsection{Derivations and the shuffle product}

One has derivations $\partial_{e_{0}},\partial_{e_{1}}, \tilde{\partial}_{e_{0}},\tilde{\partial}_{e_{1}},\partial,\tilde{\partial} : \mathcal{O}^{\sh} \rightarrow \mathcal{O}^{\sh}$ with respect to the shuffle product, defined by
$$ \forall\text{ }\text{words}\text{ }w,\text{ } \partial_{e_{i}}(e_{i}w) = w,\text{ } \partial_{e_{i}}(e_{1-i}w) = 0,\text{ } \partial_{e_{i}}(1) = 0 ,$$
$$ \forall\text{ }\text{words}\text{ }w,\text{ } \tilde{\partial}_{e_{i}}(we_{i}) = w,\text{ } \tilde{\partial}_{e_{i}}(we_{1-i}) = 0,\text{ } \tilde{\partial}_{e_{i}}(1) = 0 ,$$
$$ \partial = \partial_{e_{0}} + \partial_{e_{1}} , $$
$$ \tilde{\partial} =\partial_{e_{0}}+\tilde{\partial}_{e_{1}} . $$
The shuffle product is actually characterized by the fact that $\partial_{e_{0}}$ and $\partial_{e_{1}}$ (resp. $\tilde{\partial}_{e_{0}}$ and $\tilde{\partial}_{e_{1}}$) are derivations.

\subsection{A few operations}

\subsubsection{Inversion}

We define a map $\inv : \mathcal{O}^{\sh} \rightarrow \mathcal{O}^{\sh}$ by induction on the weight by $\inv(1)=1$, $\inv(e_{1})=\inv(e_{0})=0$, $(\inv \bullet \id)(w)= 0$, and, for any word $w$ of weight $\geq 2$,

\begin{equation} \label{eq:rec inv} \inv(w) + w + \sum_{\substack{w_{1}w_{2}=w \\ w_{1},w_{2} \not \in \{1\} \cup e_{1}^{\mathbb{N}}}} \inv(w_{1}) \sh w_{2} = 0 
\end{equation}

\begin{Fact} \label{fact depth graded inversion} for any $f \in \tilde{\Pi}(K)$,

(i) for all words $w$, we have $f^{-1}[w] = f[\inv(w)]$.

(ii) The map $inv$ preserves the depth filtration and we have, for any word $w$ of depth $>0$,
$$ \gr_{D}(\inv)(w) = - w. $$
\end{Fact}

Indeed, (i) is proved by writing the coefficients of $f^{-1}f=1$, and from the fact that we have, for all $n>0$, $f[e_{0}^{n}]=f[e_{1}^{n}] = f^{-1}[e_{0}^{n}]= f^{-1}[e_{1}^{n}] = 0$. And (ii) follows from (\ref{eq:rec inv}) by induction on the weight.

\subsubsection{Adjoint action}

Let $x \in \{0,1,\infty\}$ and $\mu \in K - \{0\}$. Consider the maps $\Pi(K) \rightarrow \Lie(\Pi)(K)$ and $\Pi(K) \rightarrow \Pi(K)$ defined as $\Ad(.)(e_{i}) : u \mapsto u^{-1}e_{i}u$ and $\Ad(.)(e^{\mu e_{i}}) : u \mapsto u^{-1}e^{\mu e_{i}}u$. (This notation comes from our convention to read the groupoid multiplication in $\pi_{1}^{\un,\dR}(\mathbb{P}^{1} - \{0,1,\infty\})$ from the right to the left.)

\begin{Lemma} \label{injectivity} The restrictions of $\Ad(.)(e_{i})$ and $\Ad(.)(e^{\mu e_{i}})$ to $\tilde{\Pi}(K)$ are injective.
\end{Lemma}

\begin{proof} It is sufficient to treat for example the case where $x=1$. The cases $x=0$ and $x=\infty$ are similar and can be deduced by applying the natural isomorphisms 
$K \langle \langle e_{1},e_{0}\rangle\rangle \simeq K\langle \langle e_{0},e_{1} \rangle \rangle$ and $K \langle \langle e_{0},e_{1}\rangle\rangle \simeq K\langle \langle e_{0},e_{\infty} \rangle \rangle$.
\newline\indent (i) Let $u \in K\langle\langle e_{0},e_{1} \rangle\rangle$ such that $u$ commutes to $e_{1}$. Let $w$ a word which is not of the form $e_{1}^{n}$, $n \geq 1$. It can be written in a unique way in the form $e_{1}^{a(w)}e_{0}z$, with $a(w) \geq 0$ and $z$ a word. We have $f[w] = (ue_{1})[we_{1}] = (e_{1}u)[we_{1}] = u(\partial_{e_{1}}(w)e_{1})$. By induction on $a(w)$, this shows that $f[w] = 0$ for all words $w$ containing at least one letter $e_{0}$. Thus $u \in  K\langle\langle e_{1} \rangle\rangle$. (See also \cite{Unver MZV}, \S5.3).
\newline\indent (ii) Let $u \in K \langle\langle e_{0},e_{1} \rangle\rangle$ such that $u$ commutes to $e^{\mu e_{1}}$. Let a word $w$ which contains at least one letter $e_{0}$. It can be written in a unique way in the form $e_{1}^{a(w)} z e_{1}^{b(w)}$ with $a(w),b(w) \geq 0$ and $z$ a word such that $\partial_{e_{1}}(z) = \tilde{\partial}_{e_{1}}(z) = 0$ (i.e. $z=e_{0}$ or $z$ is of the form $e_{0} \cdots e_{0}$). We have $(e^{\mu e_{1}}-1)u = u(e^{\mu e_{1}}-1)$, whence $\sum\limits_{l=1}^{a(w)} \frac{\mu ^{l}}{l!} f[e_{1}^{a(w)-l} z e_{1}^{b(w)}] = 
\sum\limits_{l'=1}^{b(w)} \frac{\mu^{l'}}{l'!} f[e_{1}^{a(w)} z e_{1}^{b(w)-l'}]$. By induction on $(a(w),b(w))$ with the lexicographical order, this shows that $f[w] = 0$ for all words $w$ containing at least one letter $e_{0}$, and the end of the proof is similar to 1). Thus $u \in  K\langle\langle e_{1} \rangle\rangle$.
\newline\indent (iii) The elements of $K \langle\langle e_{1}\rangle\rangle$ which satisfy the shuffle equation are those of the form $\exp(\lambda e_{1})$ with $\lambda \in K$. The only element of that type in $\tilde{\Pi}(K)$ is $1$.
\end{proof}

\subsubsection{Automorphisms induced by homographies}

Here $X=\mathbb{P}^{1} - \{0,1,\infty\}$. The Tannakian category $\mathcal{C}^{\un,\dR}(X)$ which is subjacent to $\pi_{1}^{\un,\dR}(X)$ (see \S1.1.1) has a initial pro-object : 
the trivial bundle $\Pi \times X$ endowed with the Knizhnik-Zamolodchikov connexion $\displaystyle \nabla_{\KZ} : f \mapsto df - (\frac{dz}{z}e_{0} + \frac{dz}{z-1}e_{1})$.

The group $\Aut(\mathbb{P}^{1} - \{0,1,\infty\})$ is the set homographies of $\mathbb{P}^{1}$ which induce a permutation of $\{0,1,\infty\}$, and it is thus isomorphic to the group of permutations $S_{3}$. For each $\sigma$ in $\Aut(\mathbb{P}^{1} - \{0,1,\infty\})$, the functoriality of $\pi_{1}^{\un,\dR}$ induces the automorphism $\sigma_{\ast}$ of $\Pi$, defined by $f(e_{0},e_{1}) \mapsto f(e_{\sigma(0)},e_{\sigma(1)})$. Indeed, this automorphism is characterized by the fact that it induces an automorphism of the bundle $\Pi \times X$ which commutes with $\nabla_{\KZ}$, i.e. we have $\displaystyle \frac{d\sigma(z)}{\sigma(z)}e_{0} + \frac{d\sigma(z)}{\sigma(z)-1}e_{1} = \frac{dz}{z}e_{\sigma(0)} + \frac{dz}{z-1}e_{\sigma(1)}$.

It also induces an automorphism $\sigma_{\ast}^{\vee}$ of $\mathcal{O}^{\sh}=\mathcal{O}(\Pi)$. Below we consider the case of $z \mapsto \frac{1}{z}$.

\begin{Lemma} \label{coeff automorphism} The map $(z \mapsto \frac{1}{z})_{\ast}^{\vee}$ is preserves the depth filtration and we have, for any word $w$,
$$ \gr_{D} \bigg(z \mapsto \frac{1}{z}\bigg)_{\ast}^{\vee} (w) = (-1)^{\weight(w)-\depth(w)} w .$$
\end{Lemma}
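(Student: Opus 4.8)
The plan is to compute the linear map $(z \mapsto \frac{1}{z})_{\ast}^{\vee}$ explicitly on the basis of words of $\mathcal{O}^{\sh}$ and then read off its effect on depth; there is no real difficulty beyond bookkeeping.

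First I would make the relevant automorphism explicit. As recalled just before the lemma, $(z \mapsto \frac{1}{z})_{\ast}$ is the restriction to $\Pi(K)$ of the continuous algebra automorphism $s$ of $K\langle\langle e_{0},e_{1}\rangle\rangle$ substituting $e_{0}\mapsto e_{\infty}=-e_{0}-e_{1}$ and fixing $e_{1}$. Since $\mathcal{O}^{\sh}$ and $K\langle\langle e_{0},e_{1}\rangle\rangle$ are in duality via $(f,w)\mapsto f[w]$, for which the words form dual families, and since pullback of functions along a morphism is transposition for this pairing, $(z \mapsto \frac{1}{z})_{\ast}^{\vee}$ is the transpose of $s$; thus, for a word $w$,
$$(z \mapsto \tfrac{1}{z})_{\ast}^{\vee}(w)=\sum_{v}\big(\text{coefficient of }w\text{ in }s(v)\big)\,v,$$
the sum ranging over all words $v$ on $\{e_{0},e_{1}\}$.

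Next I would expand $s(v)$ for a word $v=e_{i_{r}}\cdots e_{i_{1}}$. Writing $J=\{j\mid i_{j}=0\}$ for the set of positions of $v$ carrying the letter $e_{0}$, so that $|J|=\weight(v)-\depth(v)$, one has $s(v)=(-1)^{|J|}\sum_{\varepsilon\in\{0,1\}^{J}}w_{\varepsilon}$, where $w_{\varepsilon}$ coincides with $v$ outside $J$ and carries the letter $e_{\varepsilon_{j}}$ at each position $j\in J$. Hence the coefficient of a word $w$ in $s(v)$ equals $(-1)^{\weight(v)-\depth(v)}$ when the set of positions of $w$ carrying $e_{0}$ is contained in that of $v$ (the choice of $\varepsilon$ being then forced), and $0$ otherwise. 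As $s$ preserves the weight, one has $\weight(v)=\weight(w)$ whenever this coefficient is non-zero, and the inclusion of $e_{0}$-positions then says precisely that $v$ is obtained from $w$ by changing some of its letters $e_{1}$ into letters $e_{0}$; in particular $\depth(v)\le\depth(w)$, with equality if and only if $v=w$.

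Both assertions follow at once: every word occurring in $(z \mapsto \frac{1}{z})_{\ast}^{\vee}(w)$ has depth $\le\depth(w)$, so the map preserves the depth filtration; and the unique term of depth equal to $\depth(w)$ is $v=w$ itself, with coefficient $(-1)^{\weight(w)-\depth(w)}$, which is the asserted formula for $\gr_{D}(z \mapsto \frac{1}{z})_{\ast}^{\vee}$. The only point that merits attention is the transpose bookkeeping together with the elementary observation that, among the words obtained from $w$ by $e_{1}\to e_{0}$ substitutions, $w$ alone has the same depth as $w$ — which is exactly why the depth-graded map acts on each $w$ merely by the scalar $(-1)^{\weight(w)-\depth(w)}$.
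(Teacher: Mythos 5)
Your proof is correct and follows essentially the same route as the paper's one-line argument: the paper simply records that the dual of $f(e_{0},e_{1})\mapsto f(e_{\infty},e_{1})$ is the substitution $w(e_{0},e_{1})\mapsto w(-e_{0},-e_{0}+e_{1})$, whose expansion produces exactly the words you describe (those obtained from $w$ by turning some letters $e_{1}$ into $e_{0}$, each with coefficient $(-1)^{\weight(v)-\depth(v)}$), so that only $v=w$ survives in the depth-graded piece. Your transpose bookkeeping is just a more explicit rendering of the same computation.
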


\begin{proof} The automorphism  $(z \mapsto \frac{1}{z})_{\ast}^{\vee}$ is $f(e_{0},e_{1}) \in \Pi(K) \mapsto f(e_{\infty},e_{1}) \in \Pi(K)$. It corresponds to an automorphism of $\tilde{O}^{\sh}$, given by $w(e_{0},e_{1}) \mapsto w(-e_{0},-e_{0}+e_{1})$. When expanding $w(-e_{0},-e_{0}+e_{1})$ as a linear combination of words, the highest depth term is 
$w(-e_{0},e_{1}) = (-1)^{\weight(w) - \depth(w)}w(e_{0},e_{1})$.
\end{proof}

\begin{Notation} For any $f =f(e_{0},e_{1}) \in \tilde{\Pi}(K)$, we denote by $f_{\infty}=f(e_{0},e_{\infty}) = (z \mapsto \frac{z}{z-1})_{\ast}(f)$.
\end{Notation}

In the next sections, the idea of the proof is to reformulate certain associator equations as an equality between modules of coefficients of $\Phi$ and of $\Phi_{\infty}$. These are the ``redundancies'' in this equality which will give the depth reductions.

\section{Depth reductions for degenerated associators}

We prove the main theorem for points of $\GRT_{1}=M_{0}$, i.e degenerated associators.

\subsection{Elimination of the duality equation}

In the one-dimensional equations of $\GRT_{1}$ in the sense of \S1.2.4, we eliminate equation (\ref{eq:2 cycle}).

\begin{Lemma} \label{proposition} An element 
$f \in \tilde{\Pi}(K)$ satisfies equations 
(\ref{eq:2 cycle}), (\ref{eq:3 cycle}) and (\ref{eq:CH0}) if and only if :
\begin{equation} \label{eq:hexagon part mu equal 0}f(e_{0},e_{\infty}) = f(e_{\infty},e_{1})^{-1} f(e_{0},e_{1}) ,
\end{equation}
\begin{equation} \label{eq:CH0 mod}
- e_{0} - f^{-1}(e_{0},e_{1})e_{1}f(e_{0},e_{1}) = f(e_{0},e_{\infty})^{-1}e_{\infty} f(e_{0},e_{\infty}) .
\end{equation}
\end{Lemma}

\begin{proof} Let us apply to (\ref{eq:CH0}), on the one hand, the conjugation by $f^{-1}$, and, on the other hand, the change of variables $(e_{0},e_{1}) \rightarrow (e_{1},e_{0})$. We obtain the following system of two equations
$$ f(e_{0},e_{1}) e_{0} f(e_{0},e_{1})^{-1} + e_{1} + f(e_{0},e_{1})f(e_{0},e_{\infty})^{-1}e_{\infty}f(e_{0},e_{\infty})f(e_{0},e_{1})^{-1} = 0 , $$
$$ e_{1} + f(e_{1},e_{0})e_{0}f(e_{1},e_{0})^{-1} + f(e_{1},e_{\infty})^{-1}e_{\infty} f(e_{1},e_{\infty}) = 0 . $$
On the other hand, by Lemma \ref{injectivity}, $f$ satisfies (\ref{eq:duality}) if and only if $(f^{-1}e_{1}f)(e_{1},e_{0})= f e_{0}f^{-1}$. Moreover, by Lemma \ref{injectivity}, the map $g \mapsto g^{-1}e_{\infty}g$ is injective. This proves that, assuming equation (\ref{eq:CH0}), $f$ satisfies the duality equation (\ref{eq:2 cycle}) if and only if $f$ satisfies the 3-cycle equation (\ref{eq:3 cycle}).
\end{proof}

We denote by $\delta_{e_{x}} : \mathcal{O}^{\sh} \rightarrow \mathcal{O}^{\sh}$ the linear map which sends a word $w$ to $e_{x}$ if $w=e_{x}$ and to $0$ otherwise. 

We define $l_{0},\tilde{l}_{0},\tilde{l}_{0}^{\infty} : \mathcal{O}^{\sh} \rightarrow \mathcal{O}^{\sh}$ as follows 
$$ l_{0} = -(\inv \circ (z \mapsto \frac{z}{z-1})_{\ast}^{\vee}) \bullet \id $$
$$ \tilde{l}_{0} = - (\inv \bullet \delta_{e_{1}} \bullet \id) - \delta_{e_{0}} $$
$$ \tilde{l}_{\infty} = \inv \bullet ( -\delta_{e_{0}} - \delta_{e_{1}}) \bullet \id $$
By the definitions, for $f,g \in \tilde{\Pi}(K)$, regarded as maps $\mathcal{O}^{\sh} \rightarrow K$, we have 
$- f(e_{\infty},e_{1})^{-1}f(e_{0},e_{1}) = f(e_{0},e_{1}) \circ l_{0}$, 
$- f(e_{0},e_{1})^{-1}e_{1}f(e_{0},e_{1}) - e_{0} = f(e_{0},e_{1}) \circ \tilde{l}_{0}$, 
and 
$g(e_{0},e_{1})^{-1}e_{\infty}g(e_{0},e_{1})  = g(e_{0},e_{1}) \circ \tilde{l}_{0}^{\infty}$. 

\noindent In particular, equations (\ref{eq:CH0 mod}) and (\ref{eq:hexagon part mu equal 0}) are respectively equivalent to
\begin{equation} \label{eq:summary of equations avec tilde}
f \circ \tilde{l}_{0} = f_{\infty} \circ \tilde{l}_{\infty}, 
\end{equation}
\begin{equation} \label{eq:summary of equations sans tilde}
f \circ l_{0} = f_{\infty} .
\end{equation}

\subsection{A relation between $\Phi$ and $\Phi_{\infty}$}

We now reformulate equations (\ref{eq:summary of equations avec tilde}) and (\ref{eq:summary of equations sans tilde}) as an equality between modules of coefficients of $\Phi$ and $\Phi_{\infty}$, compatible with the depth filtration, and with coefficients in $\mathbb{Z}$.

\begin{Proposition} \label{main theorem 0}
Let $\Phi \in \tilde{\Pi}(K)$ satisfying (\ref{eq:summary of equations avec tilde}) and (\ref{eq:summary of equations sans tilde}). Then we have, for any non-negative integer $d$ :
\begin{equation} \label{eq:equality of modules 0} \Phi \big({}_\mathbb{Z} \mathcal{O}^{\sh}_{[0,d]} \big)
= \Phi_{\infty} \big({}_\mathbb{Z} \mathcal{O}^{\sh}_{[0,d+1]} \big) .
\end{equation}
\noindent More precisely, for any positive integers $d$ and $n_{i}$ ($1 \leq i \leq d$) such that $n_{d}\geq 2$, resp. $d$ and $n_{i}$ ($1 \leq i \leq d+1$) such that $n_{d+1}\geq 2$, one has the following congruences :
\begin{equation} \label{eq:congruence 3 mu = 0}
\Phi[\textbf{e}^{n_{d}-1,\ldots,n_{1}-1,0}] \equiv \Phi_{\infty}[\textbf{e}^{n_{d}-2,\ldots,n_{1}-1,0,0}] \mod \Phi({}_\mathbb{Z} \mathcal{O}^{\sh}_{[0,d-1]}),
\end{equation}
\begin{equation} \label{eq:congruence 4 mu = 0}
\Phi_{\infty}[\textbf{e}^{n_{d+1}-2,\ldots, n_{1}-1,0}] \equiv -\Phi[\textbf{e}^{n_{d+1}-1,\ldots,n_{1}-1}] \mod \Phi({}_\mathbb{Z} \mathcal{O}^{\sh}_{[0,d-1]}) .
\end{equation}
\noindent Furthermore,
\begin{equation} \label{eq:congruence 12 0}
(1 - (-1)^{\sum_{i=1}^{d}n_{i}-d}) \Phi[\textbf{e}^{n_{d}-1,\ldots,n_{1}-1,0}] \equiv \Phi_{\infty}[\textbf{e}^{n_{d}-1,\ldots,n_{1}-1,0}]
\mod \Phi({}_\mathbb{Z} \mathcal{O}^{\sh}_{[0,d-2]}) .
\end{equation}
\end{Proposition}

\begin{proof} (i) \label{lemma dg l3}
By Lemma \ref{fact depth graded inversion},
$\tilde{l}$ preserves the depth filtration and we have $\gr_{D}\tilde{l}(w) = 0$.
\noindent Let us assume that $w=\textbf{e}^{n_{d}-1,\ldots,n_{2}-1,0,0}$ with $n_{d} \geq 2$ ; then, more precisely, we have,  $\tilde{l}(w) \equiv - \tilde{\partial}w \mod {}_\mathbb{Z} \mathcal{O}^{\sh}_{[0,d-2]}$.
\newline\indent (ii) \label{lemma dg l4}
By Lemma \ref{fact depth graded inversion}, $\tilde{l}^{\infty}_{0}$ preserves the depth filtration and we have (with $w=\textbf{e}^{n_{d}-1,\ldots,n_{1}-1,n_{0}-1}$) :
\newline $\gr_{D}\tilde{l}^{\infty}_{0}(w) =  - 1_{n_{d}\geq 2} \partial w + 1_{n_{0}\geq 2} \tilde{\partial}w$. In particular, if $n_{d}\geq 2$ and $n_{0}=1$ we have : $\gr_{D}\tilde{l}^{\infty}_{0}(w) =  - \partial w$.
\newline\indent (iii) Let us prove the equality of modules (\ref{eq:equality of modules 0}) and the congruences (\ref{eq:congruence 3 mu = 0}) and (\ref{eq:congruence 4 mu = 0}) by induction on $d$. 
\newline\indent We first prove the result for $d=0$. By $\Phi \in \tilde{\Pi}(K)$ we have $\Phi[e_{0}]=\Phi[e_{1}]=0$. By the definition of $\Phi_{\infty}$ (Notation 2.4), this implies $\Phi_{\infty}[e_{0}]=\Phi_{\infty}[e_{1}]=0$. By the shuffle equation (1.1), we deduce $\Phi[e_{0}^{n}] = \Phi_{\infty}[e_{0}^{n}]=0$ for any positive integer $n$. We apply equation (\ref{eq:summary of equations avec tilde}) to the word $\text{e}^{n-1,0}$ with $n\geq 2$. We have
$(\Phi_{\infty}^{-1}e_{\infty}\Phi_{\infty})[\text{e}^{n-1,0}]=-\Phi_{\infty}[\text{e}^{n-2,0}]$ and $(e_{0}+\Phi^{-1}e_{1}\Phi)[\text{e}^{n-1,0}]=0$, whence $\Phi_{\infty}[\text{e}^{n-2,0}]=0$. By the shuffle equation (1.1), we deduce $\Phi_{\infty}({}_\mathbb{Z} \mathcal{O}^{\sh}_{[0,1]})= \Phi({}_\mathbb{Z} \mathcal{O}^{\sh}_{[0,0]})= 0$.
\newline\indent Let us assume the result for $d-1$.  Equation (\ref{eq:summary of equations avec tilde}) applied to a word $\textbf{e}^{n_{d+1}-1,\ldots,n_{2}-1,0,0}$ with $n_{d+1}\geq 2$, gives, by (i) and (ii),
$$ \Phi[\textbf{e}^{n_{d+1}-1,\ldots,n_{2}-1,0}] \in \Phi_{\infty}[\textbf{e}^{n_{d+1}-2,\ldots,n_{2}-1,0,0}] + \Phi_{\infty}({}_\mathbb{Z} \mathcal{O}^{\sh}_{[0,d]}) + \Phi({}_\mathbb{Z} \mathcal{O}^{\sh}_{[0,d-1]}); $$  by the induction hypothesis,
$\Phi({}_\mathbb{Z} \mathcal{O}^{\sh}_{[0,d-1]}) \subset \Phi_{\infty}({}_\mathbb{Z} \mathcal{O}^{\sh}_{[0,d]})$ ; by definition $\Phi_{\infty}({}_\mathbb{Z}  \mathcal{O}^{\sh}_{[0,d]}) \subset \Phi_{\infty}({}_\mathbb{Z} \mathcal{O}^{\sh}_{[0,d+1]} )$ ; whence $\Phi[\textbf{e}_{0}^{n_{d}-1,\ldots,n_{2}-1,0}] \in \Phi_{\infty}({}_\mathbb{Z} \mathcal{O}^{\sh}_{[0,d+1]})$ ; by \S \ref{convergent words}, this implies the inclusion $\Phi({}_\mathbb{Z} \mathcal{O}^{\sh}_{[0,d]}) \subset \Phi_{\infty}({}_\mathbb{Z} \mathcal{O}^{\sh}_{[0,d+1]})$; this also implies the congruence (\ref{eq:congruence 3 mu = 0}).
\newline\indent By equation (\ref{eq:summary of equations avec tilde}) applied to a word $\textbf{e}^{n_{d}-1,\ldots,n_{1}-1,0}$ with $n_{d}\geq 2$ and (i) and (ii), we have
$$ \Phi_{\infty}\big[ \textbf{e}^{n_{d+1}-2,n_{d}-1,\ldots ,n_{1}-1,0} \big]
\in -\Phi[\textbf{e}^{n_{d+1}-1,\ldots,n_{1}-1}] + \Phi_{\infty}({}_\mathbb{Z} \mathcal{O}^{\sh}_{[0,d]}) + \Phi({}_\mathbb{Z} \mathcal{O}^{\sh}_{[0,d-1]}); $$ 
by the induction hypothesis $\Phi_{\infty}({}_\mathbb{Z} \mathcal{O}^{\sh}_{[0,d]}) \subset \Phi({}_\mathbb{Z} \mathcal{O}^{\sh}_{[0,d-1]})$ ; by definition, $\Phi({}_\mathbb{Z} \mathcal{O}^{\sh}_{[0,d-1]}) \subset \Phi({}_\mathbb{Z} \mathcal{O}^{\sh}_{[0,d]})$, whence $ \Phi_{\infty}\big[ \textbf{e}^{n_{d}-1,n_{d-1}-1,\ldots ,n_{1}-1,0} \big] \in \Phi({}_\mathbb{Z} \mathcal{O}^{\sh}_{[0,d]})$ ; by \S\ref{convergent words}, this implies the converse inclusion $\Phi_{\infty}({}_\mathbb{Z} \mathcal{O}^{\sh}_{[0,d+1]}) \subset \Phi({}_\mathbb{Z} \mathcal{O}^{\sh}_{[0,d]})$ ; this also implies equation (\ref{eq:congruence 4 mu = 0}).
\newline\indent (iv) By Lemma \ref{fact depth graded inversion} and Lemma \ref{coeff automorphism}, $l_{0}$ preserves the depth filtration and we have
$\gr_{D}l_{0}(w) =  (1 - (-1)^{\weight(w)-\depth(w)})w$ 
\newline\indent (v) By (ii), (iv) and (\ref{eq:equality of modules 0}), equation (\ref{eq:congruence 12 0}) is the depth-graded version of equation (\ref{eq:summary of equations sans tilde}).
\end{proof}

\begin{Remark} By the above proof, one has full equalities beyond the congruences (\ref{eq:congruence 3 mu = 0}) (\ref{eq:congruence 4 mu = 0}) (\ref{eq:congruence 12 0}) which imply equations (\ref{eq:summary of equations sans tilde}) and (\ref{eq:summary of equations avec tilde}). By Lemma \ref{proposition}, they imply the one-dimensional part of the equations of $\GRT_{1}$. Thus we have reformulated the one-dimensional equations of $\GRT_{1}$ as an equality between modules of coefficients of $\Phi$ and $\Phi_{\infty}$.
\end{Remark}

\subsection{The adjoint depth reduction}

We deduce from Proposition \ref{main theorem 0} the depth reduction (i) of the Theorem for $\GRT_{1}$.

\begin{Corollary} \label{depth red f adjoint 0} Assume that $\Phi$ satisfies equations (\ref{eq:summary of equations avec tilde}), (\ref{eq:summary of equations sans tilde}). Then  $(n_{d},\ldots,n_{1}) + (-1)^{n_{d}+\cdots+n_{1}}(n_{1},\ldots,n_{d})$ admits a depth reduction for $\Phi$ with coefficients in $\mathbb{Z}$. More precisely :
\begin{multline}
\label{eq:formula for depth red adjoint}
\Phi[\textbf{e}^{n_{d}-1,\ldots,n_{1}-1,0}] + (-1)^{n_{d}+\cdots+n_{1}}\Phi[\textbf{e}^{n_{1}-1,\ldots,n_{d}-1,0}]
\\ \equiv 
 (-1)^{\sum\limits_{i=1}^{d} n_{i}} 
\sum_{\substack{l_{1},\ldots,l_{d-1} \geq 0
\\ 
l_{1}+\cdots+l_{d-1} = n_{d}}}
\prod_{i=1}^{d} {-n_{i} \choose l_{i}} \Phi[\textbf{e}^{n_{1}+l_{1}-1,\ldots,n_{d-1}+l_{d-1}-1,0}]
\\ + \sum_{\substack{l'_{2},\ldots,l'_{d} \geq 0
\\ l'_{2}+\cdots+l'_{d} = n_{1}}}
\prod_{i=2}^{d} {-n_{i} \choose l_{i}} \Phi[\textbf{e}^{n_{d}+l_{d}-1,\ldots, n_{2}+l_{2}-1,0}]
\mod \Phi({}_\mathbb{Z} \mathcal{O}^{\sh}_{[0,d-2]}) .
\end{multline}
Denoting by $z = \textbf{e}^{n_{d}-1,\ldots,n_{1}-1}$, the right-hand side in (\ref{eq:formula for depth red adjoint}) is also equal to $\Phi[ze_{0}] + \Phi^{-1}[e_{0}z] 
\mod \Phi({}_\mathbb{Z} \mathcal{O}^{\sh}_{[0,d-2]})$ and to $\Phi[ze_{0}] - \Phi[e_{0}z] 
\mod \Phi({}_\mathbb{Z} \mathcal{O}^{\sh}_{[0,d-2]})$.
\end{Corollary}

\begin{proof} We specialize equation (\ref{eq:summary of equations avec tilde}) to the coefficients of the form  $\textbf{e}^{0,n_{d}-1,\ldots,n_{1}-1,0}$. We obtain 
$$ (\Phi^{-1}e_{1}\Phi)[\textbf{e}^{0,n_{d}-1,\ldots,n_{1}-1,0}] \in \Phi_{\infty}^{-1}[\textbf{e}^{0,n_{d}-1,\ldots,n_{1}-1}]  +
\Phi_{\infty}[\textbf{e}^{n_{d}-1,\ldots,n_{1}-1,0}] +
\Phi_{\infty}({}_\mathbb{Z} \mathcal{O}^{\sh}_{[0,d-1]}) . $$
Moreover, we have $\Phi_{\infty}^{-1}[\textbf{e}^{0,n_{d}-1,\ldots,n_{1}-1}]=(-1)^{\sum_{i=1}^{d}n_{i}} \Phi_{\infty}[\textbf{e}^{n_{1}-1,\ldots,n_{d}-1,0}]$ by the formula for the antipode of $\mathcal{O}^{\sh}$ (\S1.1.2) and, on the other hand,
$$ (\Phi^{-1}e_{1}\Phi)[\textbf{e}^{0,n_{d}-1,\ldots,n_{1}-1,0}] \in  \Phi[\textbf{e}^{n_{d}-1,\ldots,n_{1}-1,0}] + (-1)^{n_{d}+\cdots+n_{1}}\Phi[\textbf{e}^{n_{1}-1,\ldots,n_{d}-1,0}] + \Phi({}_\mathbb{Z} \mathcal{O}^{\sh}_{[0,d-1]}) ; $$
we express 
$\Phi_{\infty}[\textbf{e}^{n_{d}-1,\ldots,n_{1}-1,0}] + (-1)^{\sum_{i=1}^{d}n_{i}} \Phi_{\infty}[\textbf{e}^{n_{1}-1,\ldots,n_{d}-1,0}]$ in terms of $\Phi$ in depth $\leq d-1$ by using (\ref{eq:congruence 4 mu = 0}) and then equation (\ref{eq:shuffle1}).
\end{proof}

In the case where $\Phi$ is the generating series of $p$MZV's, the numbers $(\Phi^{-1}e_{1}\Phi)[\textbf{e}^{0,n_{d}-1,\ldots,n_{1}-1,0}]$ are a particular case of adjoint $p$MZV's in the sense of \cite{J4}, Definition 1.2.1, whence our terminology adjoint depth reduction.

\subsection{The parity depth reduction}

We deduce from Proposition \ref{main theorem} the depth reduction (ii) of the Theorem for $\GRT_{1}$.

\begin{Corollary} \label{corollary parity depth reduction} Let $\Phi \in \tilde{\Pi}(K)$ satisfying equations (\ref{eq:summary of equations avec tilde}), (\ref{eq:summary of equations sans tilde}). Then, if $n_{1}+\cdots+n_{d}-d$ is odd,  
$(n_{1},\ldots,n_{d})$ admits a depth reduction for $\Phi$, with coefficients in $\mathbb{Z}$.
\end{Corollary}

\begin{proof} This is a consequence of the congruence (\ref{eq:congruence 12 0}) and the equality of modules (\ref{eq:equality of modules 0}).
\end{proof}

By combining the adjoint depth reduction and the parity depth reduction, we deduce :

\begin{Corollary} Let $\Phi \in \tilde{\Pi}(K)$ satisfying equations (\ref{eq:summary of equations avec tilde}), (\ref{eq:summary of equations sans tilde}). Then, if $n_{1}+\cdots+n_{d}-d$ is odd,  $(n_{d},\ldots,n_{1}) + (-1)^{n_{d}+\cdots+n_{1}}(n_{1},\ldots,n_{d})$ admits a depth reduction down to $d-2$ for $\Phi$ with coefficients in $\mathbb{Z}$.
\end{Corollary}

\begin{proof} This is a consequence of Corollary \ref{depth red f adjoint 0} and Corollary \ref{corollary parity depth reduction}.
\end{proof}

\begin{Example} \label{example depth red adjoint}
In depth one and two : for any positive integers $n$, $n_{1}$, $n_{2}$,
\begin{equation} \label{eq:finite 0} (\Phi^{-1}e_{1}\Phi)[\textbf{e}^{0,n-1,0}] =0 ,
\end{equation}
\begin{equation} \label{eq:finite 1} (\Phi^{-1}e_{1}\Phi)[\textbf{e}^{0,n_{2}-1,n_{1}-1,0}] \equiv (-1)^{n_{1}}{n_{1}+n_{2} \choose n_{1}} \Phi[\textbf{e}^{n_{1}+n_{2}-1,0}] .
\end{equation}
and if $n_{1}+n_{2}$ is even then $\Phi[\textbf{e}^{n_{1}+n_{2}-1,0}]=0$.
\end{Example}

\section{Depth reductions for non-degenerated associators}

We prove an analogue of the results of \S3 to the scheme of associators $M_{\mu}$ with $\mu\not=0$. This finishes the proof of the main theorem.

\subsection{Preliminaries}

We start with the following one-dimensional associator equations, in the sense of \S1.2.4 :

\begin{equation} \label{eq:CH mod}
f^{-1}(e_{0},e_{1})e^{-\mu e_{1}}f(e_{0},e_{1})e^{-\mu e_{0}} 
= e^{\frac{\mu}{2}e_{0}} f(e_{0},e_{\infty})^{-1}e^{\mu  e_{\infty}} f(e_{0},e_{\infty})e^{-\frac{\mu}{2}e_{0}} ;
\end{equation}
\begin{equation} \label{eq:hexagon part mu not 0} f(e_{\infty},e_{1})^{-1} e^{\frac{\mu}{2}e_{1}} f(e_{0},e_{1}) e^{\frac{\mu }{2}e_{0}} = e^{\frac{\mu}{2}e_{\infty}} f(e_{0},e_{\infty}) .
\end{equation}

We define morphisms $\mathcal{O}^{\sh} \rightarrow \mathcal{O}^{\sh}$, which correspond to morphisms of affine schemes $\Pi \rightarrow \Pi$, as follows. 

For $x \in \{0,1,\infty\}$, and $\lambda \in K$ we write $e^{\lambda x}$ for the map $\mathcal{O}^{\sh} \rightarrow \mathcal{O}^{\sh}$ which sends a word $w=x^{n}$ to $\frac{\lambda^{n}}{n!} x^{n}$, for all $n \geq 0$, and all other words to $0$.

$$ l_{\mu} =  \big( (z \mapsto \frac{1}{z})_{\ast}^{\vee}  \circ \inv \big)  \bullet e^{\frac{\mu}{2}e_{1}} \bullet \id \bullet e^{\frac{\mu}{2}e_{0}}  , $$
$$ l_{\mu }^{\infty} = e^{\frac{\mu}{2}e_{\infty}} \bullet \id $$
$$ \tilde{l}_{\mu} =   \inv \bullet e^{-\mu e_{1}} \bullet \id \bullet e^{-\mu e_{0}} , $$ 
$$ \tilde{l}_{\mu}^{\infty} = e^{\frac{\mu}{2}e_{0}} \bullet \inv \bullet e^{\mu e_{\infty}} \bullet \id \bullet e^{-\frac{\mu}{2}e_{0}} . $$
 
With these definitions, and viewing $f,f_{\infty}$ as maps $\mathcal{O}^{\sh} \rightarrow K$, equations  (\ref{eq:CH mod}) and (\ref{eq:hexagon part mu not 0}) are respectively equivalent to 
\begin{equation} \label{eq:summary of equations avec tilde mu}
f \circ \tilde{l}_{\mu} = f_{\infty} \circ \tilde{l}_{\mu}^{\infty} ,
\end{equation}
\begin{equation} \label{eq:summary of equations sans tilde mu}
f \circ l_{\mu} = f_{\infty} \circ l_{\mu}^{\infty} .
\end{equation}

\subsection{A relation between $\Phi$ and $\Phi_{\infty}$}

We reformulate equations (\ref{eq:summary of equations avec tilde mu}) and (\ref{eq:summary of equations sans tilde mu}) as a comparison between the modules of coefficients of $\Phi$ and $\Phi_{\infty}$, compatible with the depth filtration.
\newline In order to keep track of the denominators of the rational coefficients, let the ring filtration $R=(R_{n})_{n\geq 0}$ on $\mathbb{Q}[\mu]$ defined by $R_{n} = \sum\limits_{\substack{n_{1}+n_{2}\geq 0 \\ n_{1}+n_{2}=n}} \frac{1}{n_{1}!} \big(\frac{\mu}{2}\big)^{n_{2}}\mathbb{Z}$.

\begin{Proposition} \label{main theorem}
	Let $\Phi \in \tilde{\Pi}(K)$ satisfying equations (\ref{eq:summary of equations avec tilde mu}), (\ref{eq:summary of equations sans tilde mu}). Then we have, for any non-negative integer $d$ :
	\begin{equation} \label{eq:equality of modules} \Phi \big( {}_R \mathcal{O}^{\sh}_{[0,d]}  \big)
	= \Phi_{\infty} \big( {}_R \mathcal{O}^{\sh}_{[0,d+1]}   \big) .
	\end{equation}
	\noindent More precisely, for any positive integers $d$ and $n_{i}$ ($1 \leq i \leq d$) such that $n_{d}\geq 2$, resp. $n_{i}$ ($1 \leq i \leq d+1$) such that $n_{d+1}\geq 2$, one has the following congruences :
	\begin{multline} \label{eq:congruence 3 mu not 0}
	-\Phi[\textbf{e}^{n_{d}-1,\ldots,n_{1}-1,0}] 
	\equiv
	\\ \sum_{0 \leq n'_{d} \leq n_{d}-1}
	\frac{1}{n'_{d}!} \bigg(\frac{\mu}{2}\bigg)^{n'_{d}} \bigg( 
	\sum_{1 \leq l \leq n_{d}-1-n'_{d}}\frac{\mu^{l-1}}{l!} \text{ } \Phi_{\infty}[\textbf{e}^{n_{d}-1-l-n'_{d},\ldots,n_{1}-1,0,0}] - \Phi_{\infty}[\textbf{e}^{n_{d}-1-n'_{d},\ldots,n_{1}-1,0,0}] \bigg) \mod \Phi({}_R \mathcal{O}^{\sh}_{[0,d-1]}) ,
	\end{multline}
	\begin{multline} \label{eq:congruence 4 mu not 0}
	\sum_{\substack{0 \leq n'_{d+1} \leq n_{d+1}-1 \\ 1 \leq l \leq n_{d+1}-1-n'_{d+1}}} 
	\frac{1}{n'_{d+1}!} \bigg(\frac{\mu}{2}\bigg)^{n'_{d+1}}
	\frac{\mu^{l-1}}{l!} \text{ } \Phi_{\infty}[\textbf{e}^{n_{d+1}-1-l-n'_{d+1},\ldots ,n_{1}-1,0}]
	\equiv -\Phi[\textbf{e}^{n_{d+1}-1,\ldots,n_{1}-1}] 
	\mod \Phi({}_R \mathcal{O}^{\sh}_{[0,d-1]}) .
	\end{multline}
	Furthermore,
	\begin{multline} \label{eq:congruence 12}
	(1 - (-1)^{\sum_{i=1}^{d}n_{i}-d}) \Phi[\textbf{e}^{n_{d}-1,\ldots,n_{1}-1,0}] 
	\equiv \sum_{n'=0}^{n_{d}-1} \frac{1}{n'!} \bigg( \frac{\mu}{2} \bigg)^{n'} \Phi_{\infty}[\textbf{e}^{n_{d}-1-n',\ldots,n_{1}-1,0}]
	\mod \Phi({}_R \mathcal{O}^{\sh}_{[0,d-2]}) .
	\end{multline}
\end{Proposition}

\begin{proof} (i) \label{lemma dg l3}
	By Lemma \ref{fact depth graded inversion}, $\tilde{l}_{\mu}$ preserves the depth filtration and we have $\gr_{D}\tilde{l}_{\mu}(w) = 0$.
	\noindent More precisely, if $w=\textbf{e}^{n_{d}-1,\ldots,n_{2}-1,0,0}$ with $n_{d} \geq 2$, we have : $\tilde{l}_{\mu}(w) \equiv - \mu \tilde{\partial}w \mod {}_R \mathcal{O}^{\sh}_{[0,d-2]}$,
	\newline\indent (ii) \label{lemma dg l4} $\tilde{l}^{\infty}_{\mu}$ preserves the depth filtration and we have :
	\newline $\gr_{D}\tilde{l}_{\mu}^{\infty}(w) =
	\sum\limits_{\substack{0 \leq n'_{0} \leq n_{0}-1 \\ 0 \leq n'_{d} \leq n_{d}-1}}
	\frac{(-1)^{n'_{0}}}{n'_{d}!n'_{0}!} \big( \frac{\mu}{2} \big)^{n'_{d}+n'_{0}} \bigg(
	\sum\limits_{0 \leq l \leq n_{d}-1-n'_{d}}  
	\frac{\mu^{l}}{l!} \partial^{l+n'_{d}}\tilde{\partial}^{n'_{0}}w- \sum\limits_{0\leq l \leq n_{0}-1-n'_{0}}
	\mu^{l} \partial^{n'_{d}}\tilde{\partial}^{n'_{0}+l}w \bigg)$. In particular, if $w=\textbf{e}^{n_{d}-1,\ldots,n_{1}-1,0}$ with $n_{d}\geq 2$, we have :
	\newline $\gr_{D}\tilde{l}_{\mu}^{\infty}(w) =
	\sum\limits_{0 \leq n'_{d} \leq n_{d}-1}
	\frac{1}{n'_{d}!} \big(\frac{\mu}{2}\big)^{n'_{d}}
	\bigg( \sum\limits_{0 \leq l \leq n_{d}-1-n'_{d}}\frac{\mu^{l}}{l!} \text{ } \partial^{l+n'_{d}} w - \partial^{n'_{d}}w \bigg)$
	\newline\indent (iii) Let us prove the equality of modules (\ref{eq:equality of modules}) by induction on $d$.
	\newline\indent We prove first the result for $d=1$. By the proof of Proposition \ref{main theorem 0}, (iii), we have $\Phi[e_{0}^{n}]=\Phi_{\infty}[e_{0}^{n}]=0$ for any positive integer $n$, and $\Phi[e_{1}]=\Phi_{\infty}[e_{1}]=0$. We specialize equation (\ref{eq:summary of equations avec tilde mu}) to $\textbf{e}^{n-1,0}$ with $n\geq 2$. We obtain : 
	$0=\sum\limits_{\substack{l_{1},l_{2}\geq 0 \\ l_{1}+l_{2}\leq n}} e^{\frac{\mu}{2} e_{0}}[\textbf{e}^{l_{1}}] e^{\mu e_{\infty}}[\textbf{e}^{l_{2}}]f_{\infty}[\textbf{e}^{n-l_{1}-l_{2},0}] = 
	\sum\limits_{\substack{l_{1},l_{2}\geq 0 \\ l_{1}+l_{2}\leq n}}
	\frac{\mu^{l_{1}+l_{2}}}{2^{l_{1}}l_{1}!l_{2}!}
	f_{\infty}[\textbf{e}^{n-l_{1}-l_{2},0}]$. Whence, by induction on $n$, $f_{\infty}[\textbf{e}^{n-1,0}]=0$. By \S2.1.2 this implies that $f_{\infty}(\mathcal{O}^{\sh}_{[0,1]})=0$.
	\newline\indent Let us now assume the result for $d-1$. \newline\indent By (i), equation (\ref{eq:summary of equations avec tilde mu}) specified to a word of the form $\textbf{e}^{n_{d}-1,\ldots,n_{1}-1,0,0}$ with $n_{d}\geq 2$ gives
	\begin{multline*} \Phi[\textbf{e}^{n_{d}-1,\ldots,n_{1}-1,0}] \in
	\\
	\sum_{0 \leq n'_{d} \leq n_{d}-1}
	\frac{1}{n'_{d}!} \bigg(\frac{\mu}{2}\bigg)^{n'_{d}} \bigg(
	\sum_{1 \leq l \leq n_{d}-1-n'_{d}}\frac{\mu^{l-1}}{l!} \text{ } \Phi_{\infty}[\textbf{e}^{n_{d}-1-l-n'_{d},\ldots,n_{1}-1,0,0}] - \Phi_{\infty}[\textbf{e}^{n_{d}-1-n'_{d},\ldots,n_{1}-1,0,0}] \bigg) 
	\\ + \Phi_{\infty}({}_R \mathcal{O}^{\sh}_{[0,d]} ) + \Phi({}_R \mathcal{O}^{\sh}_{[0,d-1]}); 
	\end{multline*}
	\noindent the induction hypothesis implies
	$\Phi({}_R \mathcal{O}^{\sh}_{[0,d-1]}) \subset \Phi_{\infty}({}_R \mathcal{O}^{\sh}_{[0,d]})$ ; by definition $\Phi_{\infty}({}_R \mathcal{O}^{\sh}_{[0,d]} ) \subset \Phi_{\infty}({}_R \mathcal{O}^{\sh}_{[0,d+1]})$, whence $\Phi[\textbf{e}^{n_{d}-1,\ldots,n_{1}-1,0}] \in \Phi_{\infty}({}_R \mathcal{O}^{\sh}_{[0,d+1]})$ ; by \S \ref{convergent words}, this last equality implies the inclusion $\Phi({}_R \mathcal{O}^{\sh}_{[0,d]}) \subset \Phi_{\infty}({}_R \mathcal{O}^{\sh}_{[0,d+1]})$ ; and this implies the congruence (\ref{eq:congruence 3 mu not 0}).
	\newline\indent By (ii), equation (\ref{eq:summary of equations avec tilde mu}) specified to a word of the form $\textbf{e}^{n_{d+1}-1,\ldots,n_{1}-1,0}$ with $n_{d+1} \geq 2$ gives
	\begin{multline*} \Phi_{\infty}\Big[\sum_{\substack{n'_{d+1} \geq 0,\text{ }l\geq 1 \\ 0 \leq n'_{d+1}+l \leq n_{d+1}-1}}
	\frac{\mu^{n'_{d+1}+l}}{n'_{d}!l!2^{n'_{d+1}}} \text{ } \textbf{e}^{n_{d+1}-1-n'_{d+1}-l,n_{d}-1,\ldots,n_{1}-1,0} \Big]
	\\ \in \Phi[\textbf{e}^{n_{d+1}-1,\ldots,n_{1}-1}] + \Phi_{\infty}({}_R \mathcal{O}^{\sh}_{[0,d]}) + \Phi({}_R \mathcal{O}^{\sh}_{[0,d-1]}) ;
	\end{multline*}
	\noindent the induction hypothesis implies $\Phi_{\infty}({}_R \mathcal{O}^{\sh}_{[0,d]}) \subset \Phi({}_R \mathcal{O}^{\sh}_{[0,d-1]})$ ; by definition, $\Phi({}_R \mathcal{O}^{\sh}_{[0,d-1]}) \subset \Phi({}_R \mathcal{O}^{\sh}_{[0,d]})$, whence $ \Phi_{\infty}\Big[\sum\limits_{\substack{n'_{d+1}\geq 0,\text{ }l\geq 1 \\ 0 \leq n'_{d+1}+l \leq n_{d+1}-1}}
	\frac{\mu^{n'_{d+1}+l}}{n'_{d+1}!l!2^{n'_{d+1}}} \text{ } e_{0}^{n_{d+1}-1-n'_{d+1}-l}e_{1}e_{0}^{n_{d}-1}e_{1}\cdots e_{0}^{n_{1}-1}e_{1}\Big] \in \Phi({}_R \mathcal{O}^{\sh}_{[0,d]})$ ; by \S\ref{convergent words}, and an induction on $n_{d+1}$, this implies the converse inclusion $\Phi_{\infty}({}_R \mathcal{O}^{\sh}_{[0,d+1]} ) \subset \Phi({}_R \mathcal{O}^{\sh}_{[0,d]})$ and the congruence (\ref{eq:congruence 4 mu not 0}).
	\newline\indent (iv) By Lemma \ref{fact depth graded inversion} and Lemma \ref{coeff automorphism}, $l_{\mu}$ preserves the depth filtration and we have $\gr_{D}l_{\mu}(w) = \sum\limits_{n'=0}^{n_{0}-1} (1 - (-1)^{\weight(w)-\depth(w)-n'}) \frac{1}{n'!}( \frac{\mu}{2})^{n'}  \tilde{\partial}^{n'}w$
	\noindent In particular, if $w=\textbf{e}^{n_{d}-1,\ldots,n_{1}-1,0}$ with $n_{d}\geq 2$, we have :
	$\gr_{D}l_{\mu}(w)= (1-(-1)^{\depth(w) - \weight(w)}) w$.
	\newline\indent (v) \label{lemma dg l2}$l^{\infty}_{\mu}$ preserves the depth filtration and we have $\gr_{D}l_{\mu}^{\infty}(w) =
	\sum\limits_{n'=0}^{n_{d}-1} \frac{1}{n'!} \big( \frac{\mu}{2} \big)^{n'} \partial^{n'}w$.
	\newline\indent (vi) By (iv), (v) and the equality of modules (\ref{eq:equality of modules}), the congruence (\ref{eq:congruence 12}) is the depth-graded version of equation (\ref{eq:summary of equations sans tilde}).
\end{proof}

\subsection{The adjoint depth reduction}

We deduce from Proposition \ref{main theorem} the depth reduction (i) of the Theorem for $M_{\mu}$.

\begin{Corollary} \label{depth red f adjoint} Let $\Phi \in \tilde{\Pi}(K)$ satisfying equations (\ref{eq:summary of equations avec tilde}) and (\ref{eq:summary of equations sans tilde}). For any positive integers $d$ and $n_{i}$ ($1 \leq i \leq d$), $(n_{1},\ldots,n_{d}) + (-1)^{n_{1}+\cdots+n_{d}}(n_{d},\ldots,n_{1})$ admits a depth reduction for $\Phi$ with coefficients in $R$.
\end{Corollary}

\begin{proof} We specialize the equation (\ref{eq:summary of equations avec tilde}) to the coefficients of the form $\textbf{e}^{0,n_{d}-1,\ldots,n_{1}-1,0}$. We obtain a formula for $(\Phi^{-1}e_{1}\Phi)[\textbf{e}^{0,n_{d}-1,\ldots,n_{1}-1,0}]$ in $\Phi_{\infty}(\mathcal{O}^{\sh}_{[0,d]})$. Applying the expression of $\Phi_{\infty}$ in depth $d$ in terms of $\Phi$ in depth $\leq d-1$ given by $\Phi \circ \tilde{l}_{\mu} = \Phi_{\infty} \circ \tilde{l}_{\mu}^{\infty}$ gives the result.
\end{proof}

\subsection{The parity depth reduction}

We deduce from Proposition \ref{main theorem} the depth reduction (ii) of the Theorem for $M_{\mu}$.

\begin{Corollary} \label{depth red f parity} Let $\Phi \in \tilde{\Pi}(K)$ satisfying equations (\ref{eq:summary of equations avec tilde}) and (\ref{eq:summary of equations sans tilde}). For any positive integers $d$ and $n_{i}$ ($1 \leq i \leq d$) such that $n_{1}+\cdots+n_{d}-d$ is odd,
	$(n_{1},\ldots,n_{d})$ admits a depth reduction for $\Phi$, with coefficients in $R$.
\end{Corollary}

\begin{proof} This follows from the equality of modules (\ref{eq:equality of modules}) and the congruence (\ref{eq:congruence 12}).
\end{proof}

\begin{Remark} The analogue of the parity depth reduction for $\Phi_{\infty}$ is that, if $n_{d}+\cdots+n_{1}-d$ is odd, $\sum\limits_{l=0}^{n_{d}-1} \frac{1}{l!} \big(\frac{\mu}{2} \big)^{l} (n_{1},\ldots,n_{d-1},n_{d}-l)$ admits a depth reduction for $\Phi_{\infty}$ with coefficients in $R$. This follows again from equations (\ref{eq:equality of modules}) and (\ref{eq:congruence 12}).
\end{Remark}

We can combine the adjoint depth reduction and the parity depth reduction.

\begin{Corollary} Let $\Phi \in \tilde{\Pi}(K)$ satisfying equations (\ref{eq:summary of equations avec tilde}) and (\ref{eq:summary of equations sans tilde}). For any positive integers $d$ and $n_{i}$ ($1 \leq i \leq d$) such that $(n_{1}+\cdots+n_{d})-d$ is even, $(n_{1},\ldots,n_{d}) + (-1)^{n_{1}+\cdots+n_{d}}(n_{d},\ldots,n_{1})$ has a depth reduction down to $d-2$ for $\Phi$ with coefficients in $R$.
\end{Corollary}

\begin{proof} Follows from Corollary \ref{depth red f adjoint} and Corollary \ref{depth red f parity}.
\end{proof}

The results of this section apply in particular when $K=\mathbb{C}$, $\Phi$ is the non-commutative generating series of multiple zeta values and $\mu=2\pi i$. The parity depth reduction for MZV's has been proved in \cite{Tsumura}, \S1. As mentioned in the introduction, it was first in depth 1 and 2 by Euler. The first example in depth two is the equality $\zeta(1,2)=\zeta(3)$. The parity depth reduction has been proved for multiple polylogarithms in \cite{Panzer}. For solutions to the double shuffle relations, the parity depth reduction was proved in \cite{IKZ}, \S8, and the adjoint depth reduction in \cite{Yasuda}, Proposition 3.3.

\section{Applications to the study of $p$-adic multiple zeta values via explicit formulas}

The results of \S3 and \S4 can be applied, respectively to  $\Phi_{p,\alpha} \in M_{0}(\mathbb{Q}_{p})$ and $\Phi_{\KZ} \in M_{2\pi i}(\mathbb{R})$, the non-commutative generating series of $p$-adic respectively real multiple zeta values. The results of \S4 can also the lift of $\Phi_{p,\alpha}$ with coefficients in $\B_{\dR}$, and $\mu$ equal to $t$, the $p$-adic analogue of $2\pi i$, which is a uniformizer and must be here considered to be of weight $1$. We now discuss some particular applications to the study of $p$-adic multiple zeta values via explicit formulas.

\subsection{The relation between ($p$-adic) multiple zeta values and their adjoint analogues}

The goal of \cite{J1, J2, J3} was to find explicit formulas for $p$-adic multiple zeta values, as sums of series, $p$-adic analogues of (\ref{eq:mzv}). What we found is such formulas but, not for $p$-adic multiple zeta values themselves, but for the following variant, which we called adjoint $p$-adic multiple zeta values in \cite{J4} :

\begin{equation} \label{eq:number adjoint} \begin{array}{ll}
\zeta_{p,\alpha}^{\Ad}(n_{1},\ldots,n_{d};l) & = \displaystyle  (\Phi_{p,\alpha}^{-1}e_{1}\Phi_{p,\alpha})[e_{0}^{l}e_{1}e_{0}^{n_{d}-1}e_{1} \cdots e_{0}^{n_{1}-1}e_{1}] 
\\ & = \displaystyle  \sum_{d'=0}^{d} \sum_{\substack{l_{d'+1},\ldots,l_{d}\geq 0 \\ l_{d'+1} + \cdots + l_{d} = l}} \prod_{i=d'+1}^{d} {-n_{i} \choose l_{i}}
\zeta_{p,\alpha}(n_{1},\ldots,n_{d'}) \zeta_{p,\alpha}(n_{d}+l_{d},\ldots,n_{d'+1}+l_{d'+1})
\end{array}
\end{equation}

The numbers (\ref{eq:number adjoint}) are central for us to deal with explicit formulas for $p$-adic multiple zeta values :fFor any question on $p$-adic multiple zeta values that we want to study via explicit formula, we must first find and solve the ``adjoint'' variant of the question which involves these numbers.

Then, in \cite{J4, J5, J6} we have related the motivic Galois theory of $p$-adic multiple zeta values and the formulas found in the previous papers. There, we consider that for any question on $p$-adic MZV's which one wants to study by explicit formulas, one has first to find its adjoint variants and to solve it. This strategy is the only one which arises naturally from our computations of \cite{J1, J2, J3}.

An index $(n_{1},\ldots,n_{d};l)$ being identified to $e_{0}^{l}e_{1}e_{0}^{n_{d}-1}e_{1}\ldots e_{0}^{n_{1}-1}e_{1}$, we say that it has depth $d+1$ and weight $n_{1}+\cdots+n_{d}+l+1$.

A small variation on the proof of proposition 3.2 gives the following result, which clarifies the relation between $p$-adic multiple zeta values and their adjoint variants, and justifies that in \cite{J1,J2,J3,J4,J5,J6} we can consider only adjoint $p$-adic multiple zeta values and not directly $p$-adic multiple zeta values.

\begin{Proposition} \label{proposition adjoint 1}
The following $\mathbb{Q}$-vector spaces are equal : 

(i) The vector space $V_{n,\leq d}$ generated by $p$-adic multiple zeta values $\zeta_{p,\alpha}(w)$ with $w$ of weight $n$ and depth $\leq d$.
	
(ii) The vector space $V^{\Ad}_{n+1,\leq d+1}$ generated by adjoint $p$-adic multiple zeta values $\zeta^{\Ad}_{p,\alpha}(w)$ with $w$ of weight $n$ and depth $\leq d+1$.
\end{Proposition}

\begin{proof} The inclusion $V^{\Ad}_{n+1,\leq d+1} \subset V_{n,\leq d}$ follows from the shuffle relation (\S1.1.4 a). Let us prove $V_{n,\leq d} \subset V^{\Ad}_{n+1,\leq  d+1}$ by induction on $d$. For $d=0$, we have $V_{n,0} = \{0\}$ and the result is clear. Assume the result true for $d-1$.

Let a word $w$ on $\{e_{0},e_{1}\}$ of depth $d-1$ and weight $n-2$, thus $e_{0}we_{1}$ has depth $d$ and weight $n$. By the fact 2.1 (ii), we have $(\Phi^{-1}e_{1}\Phi)[e_{0}w e_{1}^{2}] \in \Phi^{-1}[e_{0}w e_{1}] + \Phi(\mathcal{O}^{\sh}_{\ast,\leq d-1})$, whence $\Phi^{-1}[e_{0}w e_{1}] \in (\Phi^{-1}e_{1}\Phi)[e_{0}w e_{1}^{2}] + \Phi(\mathcal{O}^{\sh}_{\ast,\leq d-1}) \subset V^{\Ad}_{n+1,d+1} + V_{n,\leq d-1} \subset V^{\Ad}_{n+1,\leq d+1}$, where the last inclusion follows from the induction hypothesis.

On the other hand, by the fact 2.1 (ii), we have $\Phi^{-1}[e_{0}w e_{1}] \in -\Phi[e_{0}w e_{1}] + \Phi(\mathcal{O}^{\sh}_{\ast,\leq d-1}) = -\Phi[e_{0}w e_{1}] + V_{n,\leq d-1} \in V^{\Ad}_{n+1,\leq d+1}$ by the induction hypothesis.

We deduce that $\Phi[e_{0}w e_{1}] \in V^{\Ad}_{n+1,\leq d+1}$. Since this is true for all $w$, this proves the result for $d$.
\end{proof}

\begin{Remark} In \cite{J4} we also introduce and study complex adjoint multiple zeta values as follows : 

$$ \zeta^{\Ad}(n_{1},\ldots,n_{d};l) = (\Phi_{\KZ}^{-1}e^{2\pi i e_{1}} \Phi_{\KZ})[e_{0}^{l} e_{1} e_{0}^{n_{d}-1}e_{1} \cdots e_{0}^{n_{1}-1}e_{1}] $$

One has a complex analogue of the proposition \ref{proposition adjoint 1} which is proved similarly.
\end{Remark}

\begin{Remark} Let $N$ be a positive integer. The de Rham pro-unipotent fundamental groupoid of $(\mathbb{P}^{1} - \{0,\mu_{N},\infty\})/K$, where $K$ is a field of characteristic $0$ which contains a primitive $N$-th root of unity, admits a description which can be obtained from the one of $\pi_{1}^{\un,\dR}(\mathbb{P}^{1} - \{0,1,\infty\})$ of \S1.1, by replacing the alphabet $\{e_{0},e_{1}\}$ by the alphabet $\{e_{0} \} \cup \{e_{\xi}\text{ }|\text{ }\xi^{N} = 1 \}$.
Multiple zeta values have cyclotomic generalizations, as well as their $p$-adic analogues if $p$ does not divide $N$. Their indices are words 
$\big( (n_{i})_{d},(\xi_{i})_{d}\big) = e_{0}^{n_{d}-1}e_{\xi_{d}} \ldots e_{0}^{n_{1}-1}e_{\xi_{1}}$. $p$-adic cyclotomic multiple zeta values are studied in \cite{J1, J2, J3, J4, J5, J6}. The proposition \ref{proposition adjoint 1} can be easily generalized to this case.
\end{Remark}

\subsection{A new point of view on the relation between $p$-adic and finite multiple zeta values}

\subsubsection{The integrality of $p$-adic multiple zeta values}

The pro-unipotent fundamental groupoid has a cohomological interpretation, by a theorem of Beilinson, reviewed in \cite{Goncharov} \S4, and developed in \cite{Deligne Goncharov} \S3, in order to construct the motivic pro-unipotent fundamental groupoid. As a consequence, the crystalline realization of the pro-unipotent fundamental groupoid can be related to log-crystalline cohomology. Using this cohomological interpretation and the comparison between Frobenius and the Hodge filtration on log-crystalline cohomology, Akagi, Hirose and Yasuda have proved the following \cite{AHY}:

\begin{equation} \label{eq:integrality} \zeta_{p}^{\KZ}(n_{1},\ldots,n_{d}) \in \sum_{n> n_{1}+\ldots+n_{d}} \frac{p^{n}}{n!}\mathbb{Z}_{p} 
\end{equation}

This has also been proved independently by Chatzistamatiou \cite{Cha}. We note that in the equations of \S3, the rational coefficients are in $\mathbb{Z}$ and thus, if we specialize these equations to $p$-adic multiple zeta values, each term of weight $N$ is in $\sum\limits_{n>N} \frac{p^{n}}{n!}\mathbb{Z}_{p}$.

The integrality property can be lifted to $B_{dR}$ : the lift of the above $p$-adic multiple zeta value is in $\sum\limits_{n> n_{1}+\ldots+n_{d}} \frac{t^{n}}{n!}B_{dR}^{+}$ where $t$ is the uniformizer of $B_{dR}^{+}$. If $p\not=2$, in the equations of \S4 specified to this lift of $p$-adic multiple zeta values in $B_{dR}$, with $\mu=t$ which has valuation 1 and weight $1$, each term of weight $n$ is in $\sum\limits_{n> n_{1}+\ldots+n_{d}} \frac{t^{n}}{n!}B_{dR}^{+}$.

It is interesting to study the connection between this integrality property and our theory which uses explicit formulas. Our explicit computation does not use log-crystalline cohomology, but a sort of equivalent of rigid cohomology, on which there is no integral strcture, unlike on log-crystalline cohomology. A priori there is no reason to hope for recovering the integrality by the explicit formulas. However, an interesting phenomenon occurs, that the formulas reflect at least partially the integrality. This phenomenon is expressed in an original way by the notion of finite multiple zeta values.

\subsubsection{Finite multiple zeta values}

Kaneko and Zagier conjecture that the following formula defines an isomorphism between the $\mathbb{Q}$-algebra generated by finite MZV's and the $\mathbb{Q}$-algebra of generated by MZV's moded out by the ideal $(\zeta(2))$ : 
\begin{equation} \label{eq:conjecture Kaneko Zagier} \zeta_{A}(n_{1},\ldots,n_{d}) \mapsto \sum_{d'=0}^{d} (-1)^{n_{d'+1}+\cdots+n_{d}} \zeta(n_{1},\ldots,n_{d'}) \zeta(n_{d},\ldots,n_{d'+1}) \mod \zeta(2) .
\end{equation}

Let us note that the right-hand side in (\ref{eq:conjecture Kaneko Zagier}) is also the following numbers, which appear in our result of adjoint depth reduction
\begin{equation} \label{eq:adjoint0} (\Phi_{\KZ}^{-1}e_{1}\Phi_{\KZ})[e_{1}e_{0}^{n_{d}-1}e_{1} \ldots e_{0}^{n_{1}-1}e_{1}] \mod \zeta(2) .
\end{equation}

We have proved in \cite{J2} the following result, which was conjectured by Akagi, Hirose and Yasuda for $\alpha=1$ :

\begin{equation} \label{eq: equation of J2} (p^{\alpha})^{n_{1}+\cdots+n_{d}}\sum_{0<m_{1}<\cdots<m_{d}<p^{\alpha}} \frac{1}{m_{1}^{n_{1}} \cdots m_{d}^{n_{d}}} = \sum_{l=0}^{\infty} \zeta_{p,\alpha}^{\Ad}(n_{1},\ldots,n_{d};l) 
\end{equation}

Let the map of ``reduction modulo infinitely large primes'':
$$ \red_{/ p \rightarrow \infty} :  (x_{p})_{p} \in \prod'_{p\in\mathcal{P}}\mathbb{Q}_{p}= \{(x_{p}) \in \prod_{p} \mathbb{Q}_{p}\text{ }|\text{ } \text{for p large}, x_{p} \in \mathbb{Z}_{p}\} \rightarrow (x_{p} \mod p)_{p\text{ large}} \in \mathcal{A}$$ 

The integrality (\ref{eq:integrality}) implies that $\zeta_{p}^{\KZ}(n_{1},\ldots,n_{d}) \in p^{n_{1}+\cdots+n_{d}} \mathbb{Z}_{p}$ for $p>n_{1}+\ldots+n_{d}+1$. Denoting by $\zeta_{p}^{\De} = p^{-\weight} \zeta_{p,1}$, by \cite{Furusho MZV2} theorem 2.8, this implies $\zeta_{p}^{\KZ}(n_{1},\ldots,n_{d}) \in \mathbb{Z}_{p}$ for $p>n_{1}+\ldots+n_{d}+1$.

Akagi, Hirose and Yasuda have joined equations (\ref{eq:integrality}) and (\ref{eq: equation of J2}) to deduce :

\begin{equation} \label{eq:finite in terms of p-adic}
\zeta_{\mathcal{A}}(n_{1},\ldots,n_{d}) = \red_{/ p \rightarrow \infty} (\zeta_{p}^{\De}(n_{1},\ldots,n_{d}))_{p\in\mathcal{P}}
\end{equation}

Moreover, Yasuda has proved, as the main result of \cite{Yasuda}, that the numbers (\ref{eq:adjoint0}) generate the $\mathbb{Q}$-vector space of MZV's mod $\zeta(2)$, and similarly for $p$-adic mutliple zeta values (his proof works for all solutions to the double shuffle equations). This combined to (\ref{eq:finite in terms of p-adic}) implies that the image of the $\mathbb{Q}$-algebra generated by the numbers $(\zeta_{p}^{\De}(n_{1},\ldots,n_{d}))_{p\in\mathcal{P}}$ by $\red_{\red_{/ p \rightarrow \infty}}$ is exactly the $\mathbb{Q}$-algebra generated by finite multiple zeta values. It also gives a way to write the image of $(\zeta_{p}^{\De}(n_{1},\ldots,n_{d}))_{p\in\mathcal{P}}$ in terms of finite multiple zeta values.

\begin{Example} (i) If $p-1$ divides $n$, then $\sum\limits_{0<m<p}\frac{1}{m^{n}} \equiv -1 \mod p$ and otherwise $\sum\limits_{0<m<p}\frac{1}{m^{n}} \equiv 0 \mod p$.
In particular, for any $n$ we have $\zeta_{\mathcal{A}}(n) = 0$.
\newline (ii) (\cite{Hoffman}, theorem 6.1)
If $p> n_{1}+n_{2}$, then
$\sum\limits_{0<m_{1}<m_{2}<p} \frac{1}{m_{1}^{n_{1}}m_{2}^{n_{2}}} \equiv (-1)^{n_{2}-1}{n_{1}+n_{2} \choose n_{1}} \frac{B_{p-n_{1}-n_{2}}}{n_{1}+n_{2}} \mod p \equiv \frac{-1}{n_{1}+n_{2}} {n_{1}+n_{2} \choose n_{1}}\sum\limits_{0<m_{1}<m_{2}<p} \frac{1}{m_{1}^{n_{1}+n_{2}-1}m_{2}} \mod p$.
In particular, for any $n_{1},n_{2}$, we have
$\zeta_{\mathcal{A}}(n_{1},n_{2}) = $ \newline $(-1)^{n_{2}-1}{n_{1}+n_{2} \choose n_{1}}\zeta_{\mathcal{A}}(1,n_{1}+n_{2}-1)$, and  $\zeta_{\mathcal{A}}(n_{1},n_{2})=0$ if $n_{1}+n_{2}$ is even.
\newline For $p>n$, by the above discussion and (\ref{eq:finite 1}) the $p$-adic zeta value $\zeta_{p,1}(n)=-\Phi_{p,1}[\textbf{e}^{n-1,0}]$ is in $p^{n}\mathbb{Z}_{p}$ and is congruent to $p^{n}\frac{B_{p-n}}{p-n}$ modulo $p^{n+1}$. With our notation, for any word $w$, $\zeta_{p,1}(w)$ means Deligne's $p$-adic multiple zeta value $\zeta_{p}(w)$ defined in \cite{Deligne Goncharov}, \S5.28 multiplied by $p^{\weight(w)}$.
\end{Example}

\subsubsection{A motivic lift of the integrality}

In our explicit version of the algebraic theory of $p$-adic multiple zeta values, \cite{J4,J5,J6}, the main objects are not $p$-adic and finite multiple zeta values, but the two following objets defined in \cite{J4} : adjoint $p$-adic multiple zeta values (\ref{eq:number adjoint}), and \emph{multiple harmonic values} :

$$ \har_{\mathcal{P}^{\mathbb{N}}}(n_{1},\ldots,n_{d}) = \Big( (p^{\alpha})^{n_{1}+\cdots+n_{d}}\sum_{0<m_{1}<\cdots<m_{d}<p^{\alpha}} \frac{1}{m_{1}^{n_{1}} \cdots m_{d}^{n_{d}}} \Big)_{(p,\alpha) \in \mathcal{P} \times \mathbb{N}_{\geq 1}} \in \bigg( \prod_{p \in \mathcal{P}} \mathbb{Q}_{p} \bigg)^{\mathbb{N}} $$

We show in \cite{J4, J5, J6} that the main properties of finite multiple zeta values have an adelic lift to properties of multiple harmonic values, and that the main properties of the numbers (\ref{eq:adjoint0}) have generalizations to properties of adjoint multiple zeta values.

Our theory is expressed as a relation between properties of adjoint $p$-adic multiple zeta values and properties multiple harmonic values. We define in \cite{J6} the motivic multiple harmonic values as follows. They are elements of the weight-adic completion $\mathcal{Z}^{\mot} = \prod_{n=0}^{\infty} \mathcal{Z}_{n}^{\mot}$of the weight-graded algebra $\mathcal{Z}^{\mot} = \oplus_{n=0}^{\infty} \mathcal{Z}_{n}^{mot}$ of motivic multiple zeta values, $\mathcal{Z}_{n}^{\mot}$ being the $\mathbb{Q}$-vector space of motivic multiple zeta values of weight $n$. The definition is motivated by equation (\ref{eq: equation of J2}) and results of \cite{J4, J5, J6}:

$$ \har^{\mot}(n_{1},\ldots,n_{d}) = \sum_{l=0}^{\infty}
(\Phi^{-1}e_{1}\Phi)^{\mot}[e_{0}^{l}e_{1}e_{0}^{n_{d}-1}e_{1} \ldots e_{0}^{n_{1}-1}e_{1}] $$

The goal of \cite{J6} is to study how multiple harmonic values can be regarded as ``periods'' in a certain sense, although they are not technically periods. This interprets phenomena observed in \cite{J1, J2, J3, J4, J5}. One useful result is the following, a completed motivic lift of equation (\ref{eq:finite in terms of p-adic}) :

\begin{Proposition} For each $w=e_{1}e_{0}^{n_{d}-1}e_{1}\ldots e_{0}^{n_{1}-1}e_{1}$, there exists a sequence $w_{n}$ such that $w_{n}$ is a $\mathbb{Q}$-linear combination of words of weight equal to $\weight(w) + n$, such that $(\Phi^{-1}e_{1}\Phi)^{\mot}[w] = \sum_{n=0}^{\infty} \har^{\mot}(w_{n})$. 

In particular, the weight-adic completion of the algebra of motivic multiple zeta values is topologically generated by the motivic multiple harmonic values.
\end{Proposition}

\begin{proof} Note that Yasuda's main theorem \cite{Yasuda} applies to motivic multiple zeta values because they satisfy the double shuffle relations.
	
We prove by induction on n the existence of $w_{i}$, $1 \leq i \leq n$ and of $z_{i}$, $i\geq n$, such that 

$(\Phi^{-1}e_{1}\Phi)^{\mot}[w] = \sum\limits_{i=0}^{n} \har^{\mot}(w_{i}) + \sum\limits_{l=n+1}^{\infty} (\Phi^{-1}e_{1}\Phi)[e_{0}^{l}e_{1}w]$.

For $n=0$ we write $(\Phi^{-1}e_{1}\Phi)^{\mot}[w] = \har^{\mot}(w) - (\Phi^{-1}e_{1}\Phi)[e_{0}^{n}e_{1}w]$.

Assuming the result is true for $n$, by the main theorem of \cite{Yasuda}, there is a $\mathbb{Q}$-linear combination $w_{n+1}=e_{1}u_{n}$ of words of the form $e_{1}\cdots e_{1}$ such that we have $(\Phi^{-1}e_{1}\Phi)^{\mot}[e_{0}^{n+1}e_{1}w] = (\Phi^{-1}e_{1}\Phi)^{\mot}(e_{1}u_{n})$. Moreover, we have 
$(\Phi^{-1}e_{1}\Phi)^{\mot}(e_{1}u_{n}) = \har^{\mot}(u_{n}) - \sum\limits_{l=1}^{\infty} (\Phi^{-1}e_{1}\Phi)(e_{0}e_{1}u_{n})$. Whence the result for $n+1$.

Thus the result is true for all $n$, and we deduce the proposition by taking the limit $n \rightarrow \infty$.
\end{proof}

In \cite{J6} we study the analogue of the period conjecture for multiple harmonic values. This question is related to the difference between the rigid cohomology and the log-crystalline cohomology which represent $\pi_{1}^{\un}(\mathbb{P}^{1} - \{0,1,\infty\},\vec{1}_{1},\vec{1}_{0})$. We also relate it in \cite{J6} to the question of which part of the information on the valuation of $p$-adic multiple zeta values and multiple harmonic sums is conjecturally of motivic nature.

It is required for that purpose to bound the $p$-adic norm of the coefficients in this relation. This enables to see in which quotient of $\prod_{p\in\mathcal{P}}\mathbb{Z}_{p}$ the crystalline realization of this motivic equation can converge.

Our theorem of depth reduction brings the following contribution to the study of this question. The proof of the fact that the numbers $(\Phi^{-1}e_{1}\Phi)^{\mot}[e_{1}e_{0}^{n_{d}-1}e_{1} \ldots e_{0}^{n_{1}-1}e_{1}]$ generate the space of motivic multiple zeta values provided by \cite{Yasuda}, which we used in the proof of proposition 5.5, has two steps.

The first and main step is a proof of Corollary 3.4 tensorized with $\mathbb{Q}$, using not associator equations but double shuffle equations. Our proof using associators is simpler, and it also gives a bound on the $p$-adic norm of the denominators, which is similar to the bounds on $p$-adic norms of rational coefficients which appear in \cite{J1, J2, J3, J4, J5, J6}. This type of bounds ensure a uniform convergence in $\prod_{p\in\mathcal{P}}\mathbb{Z}_{p}$. Thus our theorem of depth reduction is a step in the study of the relation between the integrality of $p$-adic multiple zeta values and their explicit formulas.

The second step is a proof that the numbers in the right-hand side of equation (3.9) generate the space of depth-graded multiple zeta values.

\subsection{An algebraic property behind the pole of the Kubota-Leopoldt $p$-adic zeta function\label{paragraph interpretation pole}}

Let $L_{p}(s,\chi)$ be the $p$-adic zeta function of Kubota and Leopoldt, which is a $p$-adic analogue of the Riemann zeta function. Its special values coincide with $p$-adic MZV's of depth one, as follows : by \cite{Co}, equation (4) p. 173), we have, for all $n \geq 2$, denoting Teichm\"{u}ller's character by $\omega$ :
$$ \zeta_{p,1}(n) = p^{n} L_{p}(n,\omega^{1-n}). $$

This raises the question of finding interpolation of $p$-adic MZV's. We study this question in \cite{J7}.

As a preliminary to this study, here is a property of $p$-adic multiple zeta values which generalizes the classical fact that $L_{p}$ has a simple pole at $s=1$, and which is the consequence of our theorem of depth reduction for associators.

We start with the counterpart for $\Phi_{\infty}$ of the adjoint depth reduction (Corollary \ref{depth red f adjoint 0}) :

\begin{Lemma} \label{depth red finfty adjoint} Let $\Phi \in \tilde{\Pi}(K)$ which satisfies equations (\ref{eq:CH0 mod}) and (\ref{eq:hexagon part mu equal 0}). For any positive integers $d$ and $n_{i}$ ($0 \leq i \leq d$) and any non-negative integers $r_{d},r_{0}$,
	\begin{multline}
	{n_{0}+n_{d}+r_{0}+r_{d}-1 \choose n_{0}+n_{d}-1} 
	\Phi_{\infty}[\textbf{e}^{n_{d}-1+r_{d};n_{d-1}-1,\ldots,n_{1}-1;n_{0}-1+r_{0}}]
	\\ -\sum_{\substack{u_{1},\ldots,u_{d-1} \\ u_{d-1}+\cdots+u_{1}=r_{d}+r_{0}}}
	\prod_{d'=1}^{d-1} { -n_{d'} \choose u_{d'} } 
	\Phi_{\infty}[\textbf{e}^{n_{d}-1;n_{d-1}-1+u_{d-1},\ldots,n_{1}-1+u_{1};n_{0}-1}] 
	\end{multline}
	admits a depth reduction for $\Phi_{\infty}$.
\end{Lemma}

\begin{proof} Let
	$\widetilde{\Ad}(\Phi)(e_{1}) = \Phi^{-1}e_{1}\Phi - e_{1}$. Equation (\ref{eq:CH0 mod}) can be rewritten as
	\begin{equation} \label{eq:modifie}
	e_{0}\Phi_{\infty} - \Phi_{\infty}e_{0} = -(e_{1}\Phi_{\infty} - \Phi_{\infty}e_{1}) + \Phi_{\infty}\widetilde{\Ad}(\Phi)(e_{1}) .
	\end{equation}
	\noindent We note that, since $\Phi[e_{0}] = 0$, $\widetilde{\Ad}(\Phi)(e_{1})$ vanishes in depth $0$ and $1$. This and the equation (\ref{eq:equality of modules 0}) imply that we have
	\begin{equation} \label{eq:mod products}
	\Phi_{\infty}\widetilde{\Ad}(\Phi)(e_{1}) (\mathcal{O}^{\sh}_{[d]}) \subset \Phi_{\infty}(\mathcal{O}^{\sh}_{[0,d - 2]}) .
	\end{equation}
	\noindent Let a positive integer $d$ and non-negative integers $t_{i}$ $(1 \leq i \leq d)$. 
	By considering the coefficient of $\textbf{e}^{t_{d}+1,t_{d-1},\ldots,t_{1},t_{0}+1}$ in (\ref{eq:modifie}), we obtain
	\begin{equation}\label{eq:A1} 
	\Phi_{\infty}[\textbf{e}^{t_{d},t_{d-1},\ldots,t_{1},t_{0}+1}] - 
	\Phi_{\infty}[\textbf{e}^{t_{d}+1,t_{d-1},\ldots,t_{1},t_{0}}] = (\Phi_{\infty}\widetilde{\Ad}(\Phi)(e_{1}))[\textbf{e}^{t_{d}+1,t_{d-1},\ldots,t_{1},t_{0}+1}] .
	\end{equation}
	\noindent On the other hand, the shuffle relation $\Phi_{\infty}[e_{0}\text{ }\sh\text{ } \textbf{e}^{t_{d},t_{d-1},\ldots,t_{1},t_{0}}]=0$ gives 
	\begin{multline}\label{eq:A2}
	(t_{d}+1)\Phi_{\infty}[\textbf{e}^{t_{d}+1,t_{d-1},\ldots,t_{1},t_{0}}] + (t_{0}+1)\Phi_{\infty}[\textbf{e}^{t_{d},t_{d-1},\ldots,t_{1},t_{0}+1}]
	= -\sum_{d'=1}^{d-1} (t_{d'}+1)\Phi_{\infty}[\textbf{e}^{t_{d},t_{d-1},\ldots,t_{d'}+1,\ldots,t_{1},t_{0}}] .
	\end{multline}
	\noindent The linear system formed by (\ref{eq:A1}) and (\ref{eq:A2}) can be inverted as follows, after the change of notation which replaces $t_{0}$ by $t_{0}-1$, resp. $t_{d}$ by $t_{d}-1$ :
	\begin{multline} \label{eq:system1} 
	\Phi_{\infty}[\textbf{e}^{t_{d},\ldots,t_{1},t_{0}}] 
	= \sum_{d'=1}^{d-1} \frac{-(t_{d'}+1)}{t_{d}+t_{0}+1} \Phi_{\infty}[\textbf{e}^{t_{d},\ldots,t_{d'}+1,\ldots,t_{0}-1}]
	+ \frac{t_{d}+1}{t_{d}+t_{0}+1} (\Phi_{\infty}\widetilde{\Ad}(\Phi)(e_{1}))[\textbf{e}^{t_{d}+1,t_{d-1},\ldots,t_{1},t_{0}-1}] ,
	\end{multline}
	\begin{multline} \label{eq:system2}
	\Phi_{\infty}[\textbf{e}^{t_{d},t_{d-1},\ldots,t_{0}}] = \sum_{d'=1}^{d-1} \frac{-(t_{d'}+1)}{t_{d}+t_{0}+1} \Phi_{\infty}[\textbf{e}^{t_{d}-1,\ldots,t_{d'}+1,\ldots,t_{0}}]
	- \frac{t_{0}+1}{t_{d}+t_{0}+1} (\Phi_{\infty}\widetilde{\Ad}(\Phi)(e_{1}))[\textbf{e}^{t_{d}-1,t_{d-1},\ldots,t_{1},t_{0}+1}] .
	\end{multline}
	\noindent Let $(a_{d,i},a_{0,i})_{0\leq i \leq r_{d}+r_{0}}$ be any sequence of elements of $\{0,\ldots,r_{d}\} \times \{0,\ldots,r_{0}\}$, satisfying : 
	$$ \left\{ \begin{array}{l} (a_{d,0},a_{0,0})=(r_{d},r_{0})
	\text{ }\text{ }\text{ }\text{ and }\text{ }\text{ }\text{ }
	(a_{d,r_{d}+r_{0}},a_{0,r_{d}+r_{0}})=(0,0)
	\\ \forall \text{i }\in \{0,\ldots,r_{d}+r_{0}-1\},\text{ }\text{ } (a_{d,i+1},a_{0,i+1}) \in \{ (a_{d,i}-1,a_{0,i}),(a_{d,i},a_{0,i}-1) \} 
	\end{array} \right. $$
	\noindent The proof follows by induction on $(r_{d},r_{0})$ for the lexicographical order, using the the linear system formed by (\ref{eq:system1}), (\ref{eq:system2}), and equation (\ref{eq:mod products}) : we apply (\ref{eq:system1}), resp. (\ref{eq:system2}) inductively to $(t_{d},\ldots,t_{0}) = (n_{d}-1+a_{d,i},n_{d-1}-1,\ldots,n_{1}-1,n_{0}-1+a_{d,i})$ if $a_{0,i+1} = a_{0,i} - 1$, resp. $a_{d,i+1} = a_{d,i}-1$.
\end{proof}

We deduce a variant of equation (\ref{eq:congruence 3 mu = 0}) :

\begin{Proposition} \label{prop Kubota} Let $\Phi \in \tilde{\Pi}(K)$ which satisfies equations (\ref{eq:hexagon part mu equal 0}) and (\ref{eq:CH0 mod}). For any positive integers $d$ and $n_{i}$ ($1 \leq i \leq d$) such that $n_{d} \geq 2$, we have
\begin{multline} \label{eq:polar in all depths} \Phi[e_{0}^{n_{d}-1,\ldots,n_{1}-1,0}]
+ \frac{1}{n_{d}-1} \sum_{\substack{u_{1},\ldots,u_{d}\geq 0 \\ u_{d}+\cdots+u_{1}=n_{d}-2}}
\prod_{d'=1}^{d} {-n_{d'} \choose u_{d'}}
\Phi_{\infty}[\textbf{e}^{n_{d}-2;n_{d-1}-1+u_{d},\ldots,n_{1}-1+u_{2},u_{1},0}] \in \Phi(\mathcal{O}^{\sh}_{[0,d-1]}) .
\end{multline}
\end{Proposition}

\begin{proof} Lemma \ref{depth red finfty adjoint}, applied to $n_{d}=n_{0}=1$, $r_{0}=0$ implies, after renaming $r_{d}$ as $n_{d}-1$:
\begin{multline}
\Phi_{\infty}[\textbf{e}^{n_{d}-2;n_{d-1}-1,\ldots,n_{1}-1}e_{1}]
\\ - \frac{1}{n_{d}-1} \sum_{\substack{u_{1},\ldots,u_{d-1}\geq 0 \\ u_{d-1}+\cdots+u_{1}=n_{d}-2}}
	\prod_{d'=1}^{d-1} {-n_{d'} \choose u_{d'}}
	\Phi_{\infty}[\textbf{e}^{n_{d}-2;n_{d-1}-1+u_{d-1},\ldots,n_{1}-1+u_{1};n_{0}-1}] \in \text{DRed}_{d\rightarrow d-1}(\Phi_{\infty}) .
	\end{multline}
	\noindent On the other hand, by equation (\ref{eq:summary of equations avec tilde}) and equation (\ref{eq:equality of modules 0}), we have 
	$$ \Phi[\textbf{e}^{n_{d}-1,\ldots,n_{1}-1,0}]  + \Phi(\mathcal{O}^{\sh}_{[0,d-1]}) = -\Phi_{\infty}[\textbf{e}^{n_{d}-2,\ldots,n_{1}-1,0,0}] + \Phi_{\infty}(\mathcal{O}^{\sh}_{[0,d]}) . $$
	Whence the result.
\end{proof}

\begin{Example} (i) For any integer $n_{1} \geq 2$,
$$ \Phi[\textbf{e}^{n_{1}-1,0}] = \frac{(-1)^{n_{1}-1}}{n_{1}-1} \Phi_{\infty}[\textbf{e}^{0,n_{1}-1,0}] . $$
\noindent (ii) For any positive integers $n_{1},n_{2}$ such that $n_{2} \geq 2$ and $n_{2}+n_{1}$ is odd, we have :
$$ \Phi[\textbf{e}^{n_{2}-1,n_{1}-1,0}] = 
\frac{(-1)^{n_{2}-1}}{n_{2}-1} \sum_{l=0}^{n_{2}-2} {n-1+l \choose l} \Phi_{\infty}[\textbf{e}^{0,n_{1}-1+l,n_{2}-2+l,0}] + \Phi_{\infty}[\textbf{e}^{n_{2}-1,n_{1}-1,0}] . $$
In the case where $\Phi$ is the non-commutative generating series $\Phi_{p,-1}$ of $p$MZV's (here the subscript $-1$ means that the crystalline Frobenius of $\pi_{1}^{\un}(\mathbb{P}^{1} \setminus \{0,1,\infty\})$ is replaced by its inverse), (i) and (ii) above are written in \cite{Unver MZV}, respectively, \S5.11 and \S5.14 ; in (i) one can see the polar factor of the Kubota-Leopoldt $p$-adic $L$-function, and the regular factor written in terms of $\Phi_{\infty}$.
\end{Example}

Applying Proposition \ref{depth red finfty adjoint} with $n_{d} = n_{0} = 1$, to the right-hand side in Corollary \ref{prop Kubota}, we can make the formula of Corollary \ref{prop Kubota} more canonical : all coefficients $\Phi[e_{0}^{n_{d}-1}\cdots e_{1}]$ with $n_{d}\geq 2$ can be written as linear combinations of coefficients of the form $\Phi_{\infty}[e_{1} \cdots e_{1}]$ with some rational coefficients having ``poles'' attained for certain tuples $(n_{1},\ldots,n_{d})$ satisfying $n_{d}=1$ and other conditions, and coefficients of $\Phi$ at words of lower depth. This may be useful for studying $p$-adic multiple zeta values as functions of $(n_{1},\ldots,n_{d})$ viewed as a tuple of $p$-adic integers.


\begin{thebibliography}{50}
\bibitem[A]{Andre} Y. Andr\'{e}, \emph{Une introduction aux motifs (motifs purs, motifs mixtes, p\'{e}riodes)}, Panoramas et synth\`{e}ses, n°17, 2004, Soci\'{e}t\'{e} math\'{e}matique de France
\bibitem[AET]{AET} A. Alekseev, B. Enriquez, C. Torossian - \emph{Drinfeld associators, braid groups and explicit solutions of the Kashiwara-Vergne equations}, Publ. Math. Inst. Hautes Études Sci. 112 (2010), 143-189
\bibitem[AHY]{AHY} K. Akagi, M. Hirose, S. Yasuda - \emph{Integrality of $p$-adic multiple zeta values and application to finite multiple zeta values}, preprint
\bibitem[Cha]{Cha} A. Chatzistamatiou - \emph{On integrality of $p$-adic iterated integrals} - J. Algebra 474 (2017), 240-270
\bibitem[Co]{Co} R. Coleman - \emph{Dilogarithms, regulators and $p$-adic $L$-functions} - Invent. Math., 69 (1982), 2, 171-208
\bibitem[D]{Deligne} P. Deligne, \emph{Le groupe fondamental de la droite projective moins trois points}, Galois Groups over $\mathbb{Q}$ (Berkeley, CA, 1987), Math. Sci. Res. Inst. Publ. 16, Springer-Verlag, New York, 1989
\bibitem[DG]{Deligne Goncharov} P. Deligne, A. B. Goncharov, \emph{Groupes fondamentaux motiviques de Tate mixtes}, Ann. Sci. Ecole Norm. Sup. 38 (2005), 1, 1-56
\bibitem[Dr]{Drinfeld} V. G. Drinfeld - \emph{On quasitriangular quasi-Hopf algebras and on a group that is closely connected with $\Gal(\bar{\mathbb{Q}}/\mathbb{Q})$}", Algebra i Analiz, 2:4 (1990), 149-181 
\bibitem[E]{Enriquez} B. Enriquez - \emph{Quasi-reflection algebras and cyclotomic associators}, Selecta Math. (N.S.), 13 (2007) 3, 391-463
\bibitem[F1]{Furusho MZV1} H. Furusho - \emph{p-adic multiple zeta values I -- p-adic multiple polylogarithms and the p-adic KZ equation}, Invent. Math., 155 (2004), 2, 253-286
\bibitem[F2]{Furusho MZV2} H. Furusho - \emph{p-adic multiple zeta values II -- tannakian interpretations}, Amer. J. Math, 129, (2007), 4, 1105-1144
\bibitem[F3]{Furusho assoc ds} H. Furusho - \emph{Pentagon and hexagon equations}, Ann. of Math., 171 (2010), 1, 545-556
\bibitem[F4]{Furusho assoc} H. Furusho - \emph{Double shuffle relation for associators}, Ann. of Math., 174 (2011), 1, 341-360

\bibitem[G]{Goncharov} A. B. Goncharov - \emph{Multiple polylogarithms and mixed Tate motives}, arXiv:0103059v4

\bibitem[H]{Hoffman} M. Hoffman - \emph{Quasi-symmetric functions and mod p multiple harmonic sums}, Kyushu J. math., 69 (2004) 2
\bibitem[IKZ]{IKZ} K. Ihara, M. Kaneko, D. Zagier - \emph{Derivation and double shuffle relations for multiple zeta values}, Compos. Math. 142 (2006) 307-338
\bibitem[J1]{J1} D. Jarossay - \emph{A bound on the norm of overconvergent $p$-adic multiple polylogarithms}, arXiv:1503.08756 J. Number Theory 
\bibitem[J2]{J2} D. Jarossay - \emph{Pro-unipotent harmonic actions and a computation of $p$-adic cyclotomic multiple zeta values}, arXiv:1501.04893, submitted
\bibitem[J3]{J3} D. Jarossay - \emph{Pro-unipotent harmonic actions and a dynamical method in the computation of $p$-adic cyclotomic multiple zeta values}, arXiv:1610.09107, to appear in Algebra and Number Theory
\bibitem[J4]{J4} D. Jarossay - \emph{Adjoint cyclotomic multiple zeta values and cyclotomic multiple harmonic values}, arXiv:1412.5099, submitted
\bibitem[J5]{J5} D. Jarossay, \emph{The adjoint quasi-shuffle relation of $p$-adic cyclotomic multiple zeta values recovered via explicit formulas}, arXiv:1601.01158
\bibitem[J6]{J6} D. Jarossay, \emph{Cyclotomic multiple harmonic values regarded as periods}, arXiv:1601.01159
\bibitem[J7]{J7} D. Jarossay, \emph{Around interpolations of $p$-adic multiple zeta values} arXiv:1712.09976
\bibitem[Ka]{Kaneko} M. Kaneko - \emph{Finite multiple zeta values}, RIMS K\^{o}ky\^{u}roku Bessatsu B68 (2017) 175-190
\bibitem[KaZ]{Kaneko Zagier} M. Kaneko, D. Zagier - \emph{Finite multiple zeta values}, preprint
\bibitem[Ko]{Ko} M. Kontsevich, \emph{Holonomic D-modules and positive characteristic}, Japan. J. Math. 4 (2009) 1-25
\bibitem[P]{Panzer} E. Panzer - \emph{The parity theorem for multiple polylogarithms} J. Number Theory, 172 (2017) 93-113
\bibitem[R]{R} C. Reutenauer, \emph{Free Lie algebras}, London Mathematical Society Monographs. New Series, 7, The Clarendon Press Oxford University Press (1993)
\bibitem[Ts]{Tsumura} H. Tsumura, \emph{Combinatorial relations for Euler-Zagier sums} - Acta Arith. - 111 (2004) 1, 27-42
\bibitem[U1]{Unver MZV} S. Ünver - \emph{$p$-adic multi-zeta values} J. Number Theory, 108 (2004) 111-156 
\bibitem[U2]{Unver} S. Ünver, \emph{Drinfel'd-Ihara relations for p-adic multi-zeta values}. J. Number Theory, 133 (2013), 1435-1483 
\bibitem[Yam]{Yamashita} G. Yamashita, \emph{Bounds for the dimension of $p$-adic multiple $L$-values spaces}, Doc. Math., Extra Volume Suslin (2010) 687-723
\bibitem[Y]{Yasuda} S.Yasuda, \emph{Finite real multiple zeta values generate the whole space $Z$}, Int. J. Number Theory 12 (2016) 3, 787-812
\end{thebibliography}
\end{document}